\newtheorem{theorem}{Theorem}
\newtheorem{conjecture}{Conjecture}
\newtheorem{claim}{Claim}
\newtheorem{example}{Example}
\newtheorem{ob}{Observation}
\def\ps@pprintTitle{%
  \let\@oddhead\@empty
  \let\@evenhead\@empty
 \let\@oddfoot\@empty
  \let\@evenfoot\@oddfoot
}
\begin{document}

\begin{frontmatter}
\title{\textbf{On the path partition of graphs }}
\author{\textbf{Mekkia Kouider} }
\address{Universit\'{e} Paris Sud, France.}
\ead{ km@lri.fr}
\author {\textbf{Mohamed Zamime}}
\address{Faculty of Technology, University of Medea, Algeria}  
\ead{zamimemohamed@yahoo.com}
\begin{abstract}
  Let $G$ be a graph of order $n$. The maximum and minimum degree of $G$ are denoted by
  $\Delta$ and $\delta$ respectively.
  The \emph{path partition number} $\mu (G)$ of a graph $G$ is the minimum number of
paths needed to partition   the vertices of $G$. Magnant, Wang and Yuan
conjectured that $\mu (G)\leq \max \left \{ \frac{n}{\delta +1},
\frac{\left( \Delta -\delta \right) n}{\left( \Delta +\delta \right) }\right \} .$
In this work, we give a positive answer to this conjecture, for
$ \Delta \geq 2 \delta $.\medskip
\end{abstract}
\begin{keyword}
path, partition.\newline 2010 Mathematics Subject Classification: 05C20
\end{keyword}

\end{frontmatter}

\section{Introduction}

Throughout the paper, all graphs are finite, simple and undirected. Let $G$
be a graph with vertex-set $V(G)$ and edge-set $E(G).$ We denote by $n$ the
order of $G.$ The \emph{neighborhood} of a vertex $v\in V$ is $N\left(
v\right) =\left\{ u\in V:uv\in E\right\} .$ The \emph{degree }of $v,$
denoted by $d\left( v\right) ,$ is the size of its neighborhood. The \emph{%
minimum degree} of the graph $G$ is denoted by $\delta (G)$, and the \emph{%
maximum degree} by $\Delta (G).$

Let $A$ and $B$ be two subsets of $V(G)$. Let $\varepsilon (A,B)$ be the
number of edges with one end vertex in the set $A$ the other one in the set $%
B$.

In this work, we deal with the partition problem. The cover
problem and the partition problem constitute a large and important class of
well studied problems in the fields of graph theory. A \textit{cycle cover}
of a graph (resp. a \textit{path cover}) is a set $\mathcal{C}$ of cycles
(resp. paths) of the graph such that each vertex belongs to at least one
cycle (resp. one path) of $\mathcal{C}$. Many results on these concepts,
have been given in the literature. For example, Kouider \cite{Kouider 1}\cite%
{k3}, and Kouider and Lonc \cite{k4} studied the problem of covering a graph
by a minimum number of cycles. More details and references can be found in
the survey of Manuel \cite{Man}.

Among the many variations of the partition problem, we mention the \emph{%
path partition} that has been studied intensively for about sixty years. A
family $\mathcal{P}$ of paths is called a \emph{path partition} of a graph $%
G $ if its members cover the vertices of the graph and are vertex disjoint.
Its cardinality $\left\vert \mathcal{P}\right\vert $ is the number of paths
of $\mathcal{P}.$ The \emph{path partition number} of $G$ is $\mu (G)=\min
\{\left\vert \mathcal{P}\right\vert :\mathcal{P}$ is a path partition of $%
G\}.$ The concept of path partition number was introduced by Ore \cite{Ore}
in $1961$. Several works have been done in this topic. See for example \cite%
{M.Chen, H.Enomoto, Han}.

In $1996$, Reed proved the folowing result \cite{B Reed}.

\begin{theorem}
\cite{B Reed} Let $G$ be a connected cubic graph on $n$ vertices. Then%
\newline
$\mu (G)\leq \left\lceil \frac{n}{9}\right\rceil .$
\end{theorem}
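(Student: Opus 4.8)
The plan is to work with a \emph{maximum linear forest} $F$ of $G$, that is, a spanning subgraph in which every component is a path (isolated vertices allowed) and whose number of edges is as large as possible. If $F$ has $k$ components then it induces a path partition of size $k=n-|E(F)|$, so minimizing the number of paths is exactly maximizing $|E(F)|$, and it suffices to prove that $F$ has at most $\lceil n/9\rceil$ components. The whole argument rests on two local exchange operations enforced by maximality. First, a \emph{merge}: if an endpoint of one path is joined in $G$ to an endpoint of a \emph{different} path, adding that edge produces a larger linear forest, a contradiction; hence no $G$-edge runs between endpoints of distinct components, and every free (non-$F$) edge leaving an endpoint must land on an internal (degree-$2$) vertex of some path or on the opposite end of its own path. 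Second, a Pósa-type \emph{rotation}: if an endpoint $u$ is adjacent to an internal vertex $w$ with $F$-neighbours $w',w''$, then swapping $ww'$ for $uw$ yields another maximum linear forest in which $w'$ has become an endpoint; iterating generates a set of vertices that may serve as endpoints.

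First I would dispose of low connectivity. Bridges and cut vertices are handled by an induction on the block/bridge structure: deleting a bridge splits $G$ into two smaller pieces whose degrees drop only at the two bridge-ends, and the partitions of the pieces are recombined along the bridge at a bounded additive cost, which the ceiling absorbs. This reduces the problem to $2$-connected (indeed essentially $3$-connected) cubic graphs, where the rotation machinery runs without obstruction. (Equivalently one could route the argument through a suitable dominating structure, such as a dominating closed walk whose complement is then small; I prefer the linear-forest formulation because the exchange operations are more transparent.)

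The core is a discharging/counting step. I would give every vertex charge $1$, so the total is $n$, and redistribute so that every component of $F$ ends with charge at least $9$; this yields $9k\le n$ and hence the claim up to the ceiling. Long paths (at least $9$ vertices) are already self-sufficient and keep their charge. For a short path $P$ the merge rule forces all free edges at its two ends to reach internal vertices of other paths, and $3$-regularity guarantees a definite supply of such edges ($2$ per ordinary endpoint, $3$ at an isolated vertex); the rotation rule, applied around $P$, then exhibits a reservoir of distinct internal vertices near $P$ from which $P$ collects the charge by which it falls short of $9$.

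The hard part is the constant $9$ itself, not the existence of \emph{some} linear bound. The minimum-degree condition $\delta=3$ alone only gives the trivial $n/4$; pushing the bound down to $n/9$ uses the full strength of $3$-\emph{regularity}, and the delicate point is to certify that the internal vertices charged by two different short paths are genuinely distinct, i.e.\ that no vertex is over-claimed. This forces a finite but intricate case analysis of how two short paths can share a common neighbourhood, together with a verification (guided by the extremal graphs attaining $\mu(G)=\lceil n/9\rceil$) that none of these configurations leaves any slack. Controlling this overlap, rather than setting up the exchanges, is where the real work lies.
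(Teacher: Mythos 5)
This statement is Reed's theorem, which the paper only cites (reference \cite{B Reed}); it gives no proof of its own, so there is nothing in the paper to compare your argument against. Judged on its own terms, your proposal is a strategy outline rather than a proof, and it has a genuine gap that you yourself flag: the entire quantitative content of the theorem is deferred. The merge and rotation operations you set up are standard and correct (a spanning linear forest with $k$ components has $n-k$ edges, endpoints of distinct components are non-adjacent in a maximum linear forest, and P\'osa rotations generate new admissible endpoints), but these alone only yield bounds of the order $n/4$. The step that turns them into $n/9$ --- exhibiting, for each short component, enough \emph{distinct} internal vertices to charge, and certifying that no vertex is claimed by two different short components --- is exactly what you describe as ``where the real work lies'' and do not carry out. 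That case analysis is not a routine verification; it is the body of Reed's paper.

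Two further concrete problems. First, your discharging invariant ``every component of $F$ ends with charge at least $9$'' is unattainable as stated: for $K_4$ one has $\mu=1$ and $n=4$, so the unique component can never accumulate charge $9$; the conclusion $9k\le n$ is simply false there. You would need the weaker target ``all but at most one component reaches charge $9$'' (which is what the ceiling accommodates), and arranging the exceptional component consistently with the overlap analysis is an additional complication, not a cosmetic one. Second, the reduction along bridges does not close as an induction: deleting a bridge of a cubic graph leaves two pieces that are no longer cubic (the bridge-ends have degree $2$), so the statement being proved does not apply to them; you would have to prove a strengthened statement for near-cubic graphs, and the ``bounded additive cost'' of recombination must in fact be zero or negative, since $\left\lceil n_1/9\right\rceil+\left\lceil n_2/9\right\rceil$ can already exceed $\left\lceil n/9\right\rceil$.
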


Furthermore, for $2$-connected graphs, a better bound is established by Yu%
\cite{G. Yu}.

\begin{theorem}
Let $G$ be a $2$-connected cubic graph on $n$ vertices. Then \newline
$\mu (G)\leq \left\lceil \frac{n}{10}\right\rceil .$
\end{theorem}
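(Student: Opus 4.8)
The plan is to bound a minimum path partition directly, via an extremal choice together with a discharging count, by showing that on average every path must cover at least ten vertices. Let $\mathcal{P} = \{P_1,\dots,P_k\}$ be a path partition of $G$ with $k = \mu(G)$, chosen among all minimum partitions to be extremal in a secondary sense: lexicographically maximizing the multiset of path orders, equivalently minimizing the number of short paths and, subordinately, the number of trivial (one-vertex) paths. Using the identity $\lceil n/10\rceil = \lfloor (n+9)/10\rfloor$, it suffices to prove the single inequality $10k \le n+9$.

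First I would record the exchange lemmas forced on any such $\mathcal{P}$. If $x$ is an endpoint of $P_i$ and $y$ an endpoint of $P_j$ with $i \neq j$ and $xy \in E(G)$, then $xy$ merges the two paths, contradicting minimality; hence the endpoints of distinct paths form an independent set. A P\'osa-type rotation gives the second lemma: if an endpoint $x$ of $P_i$ is adjacent to an interior vertex $v$ of some $P_j$, then adding $xv$ and deleting one path-edge at $v$ yields another minimum partition whose endpoint set has changed, so the extremal choice of $\mathcal{P}$ forbids every configuration a rotation could improve. Since $G$ is cubic each endpoint has exactly three neighbours, and these two lemmas pin those neighbours down tightly, forcing the neighbours of an endpoint of a short path onto interior vertices of longer paths.

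The $2$-connectivity is what upgrades Reed's $n/9$ to $n/10$, and it enters twice. Because a $2$-connected cubic graph is bridgeless, Petersen's theorem supplies a perfect matching $M$, so $G$ splits as a $2$-factor $C = C_1 \cup \dots \cup C_t$ of disjoint cycles together with $M$; deleting one edge per cycle already produces a linear forest, and the matching edges are precisely the resource available for splicing these cycle-paths into few long paths. Separately, the absence of small edge cuts rules out the pathological small pieces that would otherwise force many short paths: a short path or short cycle must attach to the rest of $G$ through at least two independent edges, which is exactly the condition that lets the merge and rotation lemmas actually fire. Combining the cycle structure of $C$ with the endpoint constraints, I would show that every path on fewer than ten vertices in the extremal partition can be charged to a private block of at least ten vertices elsewhere, or else eliminated.

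The conclusion is a discharging computation: assign each vertex charge $1$ so the total is $n$, and let every short path draw the charge it lacks from the private block of at least ten vertices it was charged to, the $2$-connectivity guaranteeing these blocks are disjoint. After redistribution everything except a bounded shortfall of at most nine units is accounted for as ten units per path, yielding $10k \le n+9$ and hence $k \le \lceil n/10\rceil$. The main obstacle I anticipate is exactly the finite but delicate local analysis that forces the constant ten: one must enumerate the possible neighbourhoods of the endpoints of short paths, use the extremal and exchange conditions to kill every configuration leaving a path effectively shorter than ten vertices, and verify that the charging targets can always be chosen disjointly. Pinning down this case analysis, rather than the global counting, is the technical heart of the argument.
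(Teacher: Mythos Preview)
The paper does not prove this theorem: it is quoted in the introduction as a result of Yu, alongside Reed's $\lceil n/9\rceil$ bound, purely as background motivation. The paper's own contribution is the theorem on graphs with $\Delta\ge 2\delta$, and its proof machinery (the sets $X$, $W$, and the edge count $\varepsilon(X,W)$) is not directed at the cubic $2$-connected case. So there is no proof in the paper to compare your proposal against.

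As for the proposal itself, what you have written is a strategy rather than a proof, and you acknowledge this in the final paragraph: the ``finite but delicate local analysis that forces the constant ten'' is named as the technical heart and then left undone. Everything preceding that is correct but soft --- the endpoint-independence lemma and the rotation lemma are standard, and invoking Petersen's theorem to obtain a $2$-factor is legitimate for bridgeless cubic graphs --- but none of it by itself distinguishes $10$ from $9$; Reed's $\lceil n/9\rceil$ argument already uses extremal partitions and rotations. The decisive claim, that every short path can be charged to a \emph{private} block of at least ten vertices with all such blocks pairwise disjoint, is asserted without any mechanism for producing the blocks or guaranteeing disjointness, and the ``bounded shortfall of at most nine units'' is read off the target inequality rather than derived from the construction. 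Until that case analysis is actually executed, the proposal is a plausible outline but not a proof.
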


For regular graphs, in $2009$, Magnant and Martin \cite{Mag Mart}
conjectured the following.

\begin{conjecture}
\label{conj0}\label{conj1}\label{conj2}\cite{Mag Mart} Let $G$ be a $d$%
-regular graph on $n$ vertices. \newline
Then $\mu (G)\leq \frac{n}{d+1}.$
\end{conjecture}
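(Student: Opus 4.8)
The plan is to recast the bound as the equivalent inequality $n \ge \mu(G)(d+1)$ and prove it by fixing an optimal path partition and charging to each path a private block of $d+1$ vertices. First I would take a path partition $\mathcal{P}=\{P_1,\dots,P_k\}$ with $k=\mu(G)$, and sharpen the choice with a secondary extremal condition: among all minimum partitions, select one for which the multiset of path orders is, for instance, lexicographically largest (equivalently, one maximizing $\sum_i |V(P_i)|^2$). I then orient each $P_i$ and denote its ends by $a_i$ and $b_i$, with $a_i=b_i$ when $P_i$ is a single vertex.

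The two structural facts I would extract are: (i) a \emph{merging} obstruction, namely that no end of $P_i$ is adjacent to any end of $P_j$ for $i\ne j$, since such an edge would splice the two paths into one and contradict minimality of $k$; and (ii) a \emph{rotation} obstruction of P\'osa type: if an end $a_i$ is adjacent to an interior vertex $u$ of some path $P_j$, then exchanging along the edge $a_i u$ produces another minimum partition, so the secondary extremal choice forces strong restrictions on where the neighbors of $a_i$ and $b_i$ may sit. Combining (i) with $d$-regularity, each of the $d$ neighbors of an end lies in the interior of some path or is the opposite end $b_i$ of its own path, but never at the end of a different path.

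With this in hand I would run a discharging argument. Assign to every path the charge $|V(P_i)|$, so the total charge is $n$; it then suffices to redistribute so that each path retains charge at least $d+1$. A long path with $|V(P_i)|\ge d+1$ is already done. A short path sends a deficiency request to the paths hosting the neighbors of $a_i$ and $b_i$: by (i) those neighbors are interior vertices, and the rotation control of (ii) lets me earmark, for each such neighbor, a nearby interior vertex as a witness that can be charged back to $P_i$. The essential point is that these witnesses can be assigned \emph{injectively}, so that no vertex is charged beyond what it can afford.

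The step I expect to be the main obstacle is precisely this injectivity, or no-overload, bookkeeping. Rotations preserve both the number of paths and the number of used edges, so no single local move decreases anything, and the difficulty is entirely global: many short paths may have endpoints whose neighborhoods pile up on a few long paths. Ruling out such over-concentration is where full $d$-regularity (rather than merely $\delta\ge d$) must be combined with the endpoint non-adjacency of (i), and it is also why the regular case $\Delta=\delta$ lies exactly on the equality boundary of the Magnant--Wang--Yuan conjecture and cannot reuse the $\Delta\ge 2\delta$ machinery of the present paper.
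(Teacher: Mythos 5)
This statement is not something the paper proves: it is Conjecture~1, the Magnant--Martin conjecture, which the paper merely cites as open (verified only for $d\le 5$ by Magnant and Martin, with asymptotic versions by Han and by Gruslys and Letzter). The paper's own theorem requires $\Delta\ge 2\delta$ and therefore says nothing about the regular case $\Delta=\delta$, as you correctly observe at the end. So there is no ``paper proof'' to compare against, and your text must be judged as a standalone argument for an open problem.

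As such, it has a genuine gap, and you have in fact named it yourself: the injectivity (no-overload) step of the discharging. Your facts (i) and (ii) are correct and standard --- (i) is the usual merging argument, and (ii) is exactly the rotation device the paper uses in its Observation~1 and Claims~1--2 for the $\Delta\ge 2\delta$ case --- but they only give local structure around each endpoint. The entire difficulty of the conjecture is the global counting: showing that the $d$ interior neighbors of the ends of a short path can each be made to donate charge without the same donors being claimed by many different short paths. Nothing in the extremal choice you propose (lexicographically largest order multiset, or maximizing $\sum_i|V(P_i)|^2$) is shown to prevent this over-concentration, and one should be skeptical that it can: even for $d=3$ the best unconditional bound is Reed's $\lceil n/9\rceil$ rather than the conjectured $n/4$, so if the witness assignment worked as sketched it would already be a substantial new theorem. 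Until the injective assignment is actually constructed and verified against the tight examples (disjoint copies of $K_{d+1}$, where every path must receive exactly $d+1$ and there is no slack), the proposal is a plan for an attack, not a proof.
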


They verified this last conjecture for the case $d\leq 5$ (see \cite{Mag
Mart}). In $2018$, Han has an asymptotic answer.

\begin{theorem}
\cite{Han} For every $c,$ $0<c<1$ and $\alpha >0 $, there exists $n_{0}$
such that if $n\geq n_{0}$, $d\geq cn$ and $G$ is a $d$-regular graph on $n$
vertices, then $n/(d+1)$ vertex-disjoint paths cover all vertices of $G$
except $\alpha n$.
\end{theorem}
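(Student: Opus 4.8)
The plan is to exploit that $G$ is dense. Since $d\geq cn$ we have $n/(d+1)<1/c$, so the target number of paths is a constant (at most $\lceil 1/c\rceil$), and we are in the regime where the Szemer\'edi Regularity Lemma is the natural tool. First I would fix constants $\epsilon\ll \alpha,c$ and a density threshold $\rho$, apply the Regularity Lemma to obtain a partition $V_0,V_1,\dots ,V_t$ with exceptional set $|V_0|\leq \epsilon n$ and clusters of common size $L=(n-|V_0|)/t$, and form the reduced graph $R$ whose vertices are the clusters and whose edges are the $\epsilon$-regular pairs of density at least $\rho$. The crucial structural gain comes from $G$ being $d$-regular: a standard double count shows every cluster is joined by dense regular pairs to at least $(c-g(\epsilon,\rho))t$ others, and moreover regularity of $G$ forces $R$ to be \emph{nearly regular}, each vertex of $R$ having degree $(c\pm o(1))t$. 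This near-regularity is exactly what excludes the star-type and unbalanced-bipartite configurations that would otherwise let a graph of minimum degree $\sim ct$ have path-partition number as large as $\Theta(t)$.

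The second ingredient is a blow-up/embedding step. Along any path or cycle $V_{i_1}V_{i_2}\cdots V_{i_k}$ of $R$, consecutive clusters form $\epsilon$-regular pairs of positive density, so a routine greedy argument (or the Blow-up Lemma) produces a single path of $G$ running through $V_{i_1},\dots ,V_{i_k}$ that covers all but an $\epsilon$-fraction of the vertices of those clusters. Consequently, a partition of (almost all of) $V(R)$ into $m$ paths, or into $m$ cycles each subsequently opened into a path, yields a path system of $G$ covering all but at most $(\epsilon+o(1))n+|V_0|\leq \alpha n$ vertices using only $m$ paths. This reduces the whole problem to the finite statement that the nearly-$(ct)$-regular graph $R$ on $t$ vertices admits a vertex partition into at most $1/c\approx n/(d+1)$ paths (equivalently cycles).

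The main obstacle is precisely this reduced cover with the \emph{sharp} constant. I would start by splitting $R$ into its connected components: near-regularity of degree $\approx ct$ forces every component to have more than $ct$ vertices, hence there are at most $1/c$ of them, and in a component that is dense relative to its own order (degree at least half its size) one spanning path exists by Hamiltonicity of dense nearly-regular graphs. The delicate case is a component whose order is large compared with $ct$, where the degree falls below half its order; there one must use the inherited near-regularity to show it still decomposes into a number of paths proportional to its order divided by $ct$, so that the per-component totals telescope to $\le 1/c$. The genuine difficulty is that a naive component-by-component bound incurs an additive $+1$ rounding per component, which over $\sim 1/c$ components could inflate the count by a constant factor and overshoot $n/(d+1)$; resolving this requires either a global path/cycle-partition argument on $R$ matching the bound $t/(ct+1)$, or spending the rounding losses against the allowed slack. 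I would therefore absorb all rounding — the gap between $1/c$ and $\lfloor n/(d+1)\rfloor$, the $\epsilon$-fractions left in each cluster, and $V_0$ — into the $\alpha n$ uncovered-vertex budget, which is legitimate precisely because $\epsilon$ and $t^{-1}$ are chosen small in terms of $\alpha$ and $c$ at the outset.
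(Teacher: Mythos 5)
First, a point of comparison: the paper contains no proof of this statement. It is quoted from Han \cite{Han} purely as background, so there is no internal argument to measure yours against; I can only assess your sketch on its own terms and against Han's published proof.

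Your sketch has two genuine gaps. The first is the assertion that regularity of $G$ forces the reduced graph $R$ to be nearly regular, with every cluster of degree $(c\pm o(1))t$. The standard transfer gives only a lower bound (each cluster meets at least roughly $(c-\rho-2\epsilon)t$ others in dense $\epsilon$-regular pairs); there is no matching upper bound, since a cluster may be joined to almost all other clusters by pairs of density barely above $\rho$ while $G$ is only $cn$-regular. So $R$ can have vertices of degree close to $t$, and the near-regularity on which your component analysis rests is not available. The second and more serious gap is the reduced problem itself: after the blow-up step you must partition a graph $R$ on $t$ vertices of minimum degree about $ct$ into at most roughly $1/c$ paths with the sharp constant, which is precisely the statement being proved, restated on a smaller vertex set --- the Regularity Lemma has not reduced the difficulty, and you leave this step open. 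Your proposed escape, absorbing the per-component rounding into the $\alpha n$ budget, does not work: $\alpha n$ bounds the number of \emph{uncovered vertices}, whereas $n/(d+1)\leq 1/c$ is a hard cap on the number of \emph{paths}; an extra path per component of $R$ overshoots that constant cap by a constant factor, and it cannot be traded for uncovered vertices because each component of $R$ carries more than roughly $cn\gg\alpha n$ vertices of $G$ and so cannot be discarded. Han's actual argument takes a different structural route: it uses the robust-partition theorem of K\"uhn, Lo, Osthus and Staden to split a dense regular graph into a bounded number of bipartite and non-bipartite robust expander components, covers each non-bipartite one by a single Hamilton path via Hamiltonicity of robust expanders, and treats the bipartite components (where the count $n/(d+1)$ is genuinely tight) separately; it is that machinery, not the Regularity Lemma, that delivers the sharp constant.
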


Gruskys and Letzter \cite{GL} improve this result by allowing to take $%
\alpha =0.$

In $2016$, Magnant, Wang and Yuan\cite{Mag Wan} extend Conjecture \ref{conj0}
to general graphs as follows.

\begin{conjecture}
\label{conj 2}\label{conjx}\cite{Mag Wan} Let $G$ be a graph on $n$
vertices. Then 
\begin{equation*}
\mu (G)\leq \max \left\{ \dfrac{n}{\delta +1},\dfrac{\left( \Delta -\delta
\right) n}{\left( \Delta +\delta \right) }\right\} .
\end{equation*}
\end{conjecture}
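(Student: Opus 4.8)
The plan is to work with a minimum path partition and extract degree information from its endpoints. I would fix a path partition $\mathcal{P}=\{P_1,\dots,P_\mu\}$ realizing $\mu=\mu(G)$ and, among all such partitions, choose one that is extremal (for instance minimizing the number of trivial one-vertex paths and, subject to that, admitting the richest family of rotations). The first step is to record the standard minimality lemmas: (i) if $u$ and $v$ are endpoints of two \emph{distinct} paths, then $uv\notin E(G)$, since otherwise joining the two paths through $uv$ would yield a partition with $\mu-1$ paths; and (ii) the single vertex of a trivial path has no neighbour that is an endpoint of another path. Writing $A$ for the set of all path-endpoints (a trivial path contributing its unique vertex) and $I=V(G)\setminus A$ for the internal vertices, these lemmas force every neighbour of a vertex of $A$ to lie in $I$, except possibly the partner endpoint of its own path.

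The core of the argument is a double count of the edges between $A$ and $I$. With $p$ nontrivial and $t$ trivial paths, so that $\mu=p+t$ and $|A|=2p+t$, the minimum-degree hypothesis gives $\varepsilon(A,I)\ge\delta|A|-2p=(2\delta-2)p+\delta t$, the correction $2p$ accounting for the at most one within-path endpoint edge per nontrivial path. On the other hand $\varepsilon(A,I)\le\Delta|I|=\Delta(n-2p-t)$. Combining these yields the linear inequality $(2\delta-2)p+\delta t\le\Delta(n-2p-t)$ in $p$ and $t$. Setting $t=0$ for orientation, this already gives $\mu=p\le\Delta n/(2\Delta+2\delta-2)$, which sits comfortably below the target $(\Delta-\delta)n/(\Delta+\delta)$ once the ratio $\Delta/\delta$ is large, so in that regime the plain count suffices and the form $\delta(n+\mu)\le\Delta(n-\mu)$ equivalent to the conjecture is obtained.

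The main obstacle is that this plain count degrades near the lower boundary of the hypothesis: precisely at $\Delta=2\delta$ the estimate $\Delta n/(2\Delta+2\delta-2)$ \emph{exceeds} $(\Delta-\delta)n/(\Delta+\delta)$, so it does not by itself settle the conjecture there. The hard part of the proof is therefore to enlarge the effective endpoint set $A$ by P\'osa-type rotations, replacing each endpoint by the set of all vertices reachable as endpoints through rotations of its path, so that the cross-path non-adjacency lemma (i) applies to many more vertices and the lower bound $\delta|A|$ is correspondingly boosted. One must then control the edges lying \emph{inside} a single path among these rotation-endpoints, which need not obey the cross-path rule, and it is exactly in bounding this internal contribution that the assumption $\Delta\ge2\delta$ is expected to close the averaging. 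Finally I would dispose of the degenerate range $\Delta<\delta+2$ (in particular $\delta=1,\Delta=2$, where $G$ is a disjoint union of paths and cycles and $\mu\le n/2$) by a direct argument matching the term $n/(\delta+1)$, so that in every case $\mu\le\max\{n/(\delta+1),(\Delta-\delta)n/(\Delta+\delta)\}$.
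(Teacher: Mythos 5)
Your set-up matches the paper's in spirit (an extremal minimum partition, non-adjacency of endpoints of distinct paths, and a double count of the edges leaving the endpoint set, bounded below via $\delta$ and above via $\Delta$), and you correctly diagnose that the crude count of \emph{all} endpoints against \emph{all} internal vertices fails exactly in the critical regime where $\Delta$ is close to $2\delta$. But the proposal stops where the actual work begins: the rotation/closure step that is supposed to rescue the count is only announced, never carried out. In the paper that step is the entire content of Claims 1--4: starting only from the order-$1$ and order-$2$ paths, one grows a set $X$ of endpoints by a closure operation and proves that every external neighbour of $X$ is forced to be the centre of a path of order $3$ or $5$, or an internal vertex of a path of order $4$ (so every path carrying such a neighbour has order at most $5$, and an order-$4$ path carries at most two of them, absorbing at most $4$ edges from $X$). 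It is this confinement of $W=N_{ext}(X)$ to a few precisely located vertices of short paths --- not a generic boost of the lower bound $\delta|A|$ --- that sharpens the double count to $p_1+2p_2\le (k-2)(p_3+p_4+p_5)+\frac{2}{\delta}p_2$ with $k=\Delta/\delta$, and hence to $\mu(G)\le\frac{k-1}{k+1}\,n$. Without it, your inequality $(2\delta-2)p+\delta t\le\Delta(n-2p-t)$ provably cannot reach $(\Delta-\delta)n/(\Delta+\delta)$ at $\Delta=2\delta$, as you yourself observe. The proposal is an accurate road map with the hard kilometre missing.

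A second, more serious problem is the closing sentence: you propose to ``dispose of the degenerate range $\Delta<\delta+2$ by a direct argument.'' That range contains all regular graphs, for which the claimed bound becomes $\mu(G)\le n/(\delta+1)$; this is the Magnant--Martin conjecture, which is open except for $d\le 5$ and for asymptotic results on dense graphs. It is not degenerate and cannot be dispatched directly. The paper does not claim it either: it establishes the conjecture only under the hypothesis $\Delta\ge 2\delta$, and any completed version of your argument would have to carry at least the same restriction, since the counting machinery gives nothing in the nearly regular range.
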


\bigskip If true, the last conjecture would be sharp. For $%
\delta +2\leq \Delta ,$ the bound is achieved by the collection of disjoint
copies of $K_{\delta ,\Delta }.$ For $\delta =\Delta ,$ it is achieved by
the collection of disjoint copies of complete graphs $K_{\delta +1}.$ This
conjecture is proved in \cite{Mag Wan} for the case $\delta =1$ and $\delta
=2.$ \newline
In this work, we prove Conjecture \ref{conjx} for all graphs with maximum
degree $\Delta $ at least $2\delta .$

\begin{theorem}
\label{thm 1} Let $G$ be a graph of order $n$ of minimum degree $\delta
,(\delta \geq 2),$ and maximum degree $\Delta $ with $\Delta \geq 2\delta .$
Then $\mu (G)\leq \dfrac{\left( \Delta -\delta \right) n}{\left( \Delta
+\delta \right) }.$
\end{theorem}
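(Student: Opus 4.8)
The plan is to first reformulate the target inequality into a statement purely about edge counts. Writing $k=\mu(G)$ and fixing any minimum path partition $\mathcal{P}=\{P_1,\dots,P_k\}$, the union of its paths is a spanning linear forest $H$ with exactly $k$ components and therefore $n-k$ edges. Since $\Delta+\delta>0$, the desired bound $k\le(\Delta-\delta)n/(\Delta+\delta)$ is equivalent to $(\Delta+\delta)k\le(\Delta-\delta)n$, that is, to the single inequality $\Delta(n-k)\ge\delta(n+k)$. Thus it suffices to show that the linear forest $H$ is large in this precise weighted sense, and the whole argument becomes a degree count run against a well-chosen $\mathcal{P}$. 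Connectivity is not needed, since the facts below hold across distinct paths regardless of the components of $G$.

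Next I would fix an extremal minimum path partition, choosing among all partitions with $k$ paths one that is stable under the usual exchange and rotation moves. Two structural facts drive everything. First, no edge of $G$ joins an endpoint of one path to an endpoint of another, since such an edge would let us concatenate the two paths and contradict minimality; in particular every isolated vertex of $H$ (a singleton path) has all of its at least $\delta$ neighbours among the \emph{internal} vertices of other paths. Second, for each path $P$ with one end fixed, P\'osa rotations produce a set $S_P$ of alternative endpoints, and stability forces the neighbourhood of $S_P$ to be trapped, in the sense that no vertex reachable as an endpoint can be adjacent to an endpoint of another path. Together these say that, apart from a controlled number of chords between the two ends of a single path, every edge leaving the set $L$ of path-endpoints must terminate at an internal vertex of $H$.

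The counting step then evaluates $\sum_{v\in L}d(v)$ from both sides, where $t$ denotes the number of singleton paths, so $|L|=2k-t$ and $|I|=n-2k+t$ for the set $I$ of internal vertices. From below the sum is at least $\delta|L|=\delta(2k-t)$. From above, each non-singleton endpoint spends one incidence on its forest edge and each singleton none, so the number of non-forest edges meeting $L$ is $\sum_{v\in L}d(v)-2(k-t)$; by the structural facts these edges land either on an internal vertex or, as a chord, between the two ends of one path, the latter numbering at most $k-t$. Since each internal vertex has at most $\Delta-2$ non-forest incidences, balancing the lower bound against this capacity and simplifying yields an inequality in $k,t,n,\delta,\Delta$ that rearranges toward $\Delta(n-k)\ge\delta(n+k)$. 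It is here that $\Delta\ge2\delta$ is spent: it guarantees that the internal vertices are numerous enough relative to $L$ to absorb the endpoint incidences and to dominate the favourable singleton and chord terms, the extremal case being disjoint copies of $K_{\delta,\Delta}$, for which every inequality above is tight.

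The hard part will be that the crude capacity $(\Delta-2)|I|$ is too generous: plugged in directly it only gives a weaker constant, because a single edge from an endpoint to an internal vertex of another path does not by itself reduce the number of paths—breaking and reattaching merely trades two paths for two others, so the naive exchange argument stalls. One must instead iterate rotations to build a large reservoir $S_P$ of endpoints and invoke a P\'osa-type estimate $|N(S_P)|\le 2|S_P|-c$ to bound how many incidences of $L$ can actually be spent on internal vertices, thereby replacing the crude capacity by the sharp one. Making this reservoir argument quantitative, and reconciling it with the global count without losing the factor that separates $(\Delta-\delta)/(\Delta+\delta)$ from weaker bounds, is the technical core; the hypothesis $\Delta\ge2\delta$ is precisely what keeps these estimates mutually consistent, which is presumably why the complementary regime $\Delta<2\delta$ remains open.
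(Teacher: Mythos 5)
Your proposal has a genuine gap, and you essentially admit it yourself in the last paragraph. The reformulation $(\Delta+\delta)\mu(G)\le(\Delta-\delta)n$ and the general double-counting scheme (lower bound $\delta$ per endpoint, upper bound governed by $\Delta$ at internal vertices) are sound and in the same spirit as the paper, and you correctly diagnose that the crude capacity $(\Delta-2)|I|$ only yields a weaker constant. But the entire content of the theorem lies in the repair you leave unexecuted: you invoke P\'osa rotations, a reservoir $S_P$ of alternative endpoints, and an estimate of the form $|N(S_P)|\le 2|S_P|-c$, without defining $S_P$ precisely, proving the trapping property, or showing how such an estimate combines with the global degree count to recover the exact constant $\frac{\Delta-\delta}{\Delta+\delta}$. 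As written, this is a plan for a proof, not a proof; and it is far from clear that single-path rotation lemmas (designed for long-path/Hamiltonicity arguments) deliver the sharp per-vertex capacities needed here.

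The paper's actual mechanism is different and is worth contrasting with your sketch. Instead of rotating within one path, it starts from the order-$1$ and order-$2$ paths, iteratively closes up a set $X$ of path-endpoints by following external edges into the interiors of other paths, and sets $W=N_{ext}(X)$. The decisive structural step (Claims 1--3) is that $W$ is confined to the centers of order-$3$ and order-$5$ paths and the interiors of order-$4$ paths, with a path of order $4$ containing at most two vertices of $W$ and then receiving exactly $4$ edges from $X$. This confinement gives each $w\in W$ a small, type-dependent capacity ($\Delta$, $\Delta-1$, $\Delta-2$, or exactly $2$) while Claim 4 guarantees each $x\in X$ sends at least $d(x)$ or $d(x)-1$ edges into $W$; double-counting $\varepsilon(X,W)$ then bounds $p_1+2p_2$ sharply. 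Paths not meeting $X$ all have order at least $3$ and are handled by the trivial $n_2/3\le\frac{k-1}{k+1}n_2$ bound, so the global count is never run over all endpoints as in your scheme. Nothing in your proposal establishes an analogue of this confinement of $W$, which is exactly the ingredient that separates the target bound from the weaker one your crude count produces; until you supply it (or a genuinely quantitative rotation argument), the proof is incomplete.
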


We remark that $\dfrac{n}{\delta +1}\leq \dfrac{\left( \Delta -\delta
\right) n} {\left( \Delta +\delta \right) }$ if and only if $\delta +2\leq
\Delta .$ So for $\delta \geq 2$ and $\Delta \geq 2\delta $, the inequality
of the theorem is equivalent to

\begin{equation*}
\mu (G)\leq \max \left\{ \dfrac{n}{\delta +1},\dfrac{\left( \Delta -\delta
\right) n}{\left( \Delta +\delta \right) }\right\}
\end{equation*}%
which is the inequality of the Conjecture \ref{conjx}.

\section{Preliminaries}

 Let us introduce the following notations and definitions. Let%
\textrm{\ $\mathcal{P}$ }be a minimum path partition of $V(G)$. So, $%
\left\vert \mathcal{P}\right\vert =\mu (G)$. Let $p_{i}$ be the number of
paths of order $i\in \{1,2\}$ in $\mathcal{P}.$ \newline We may suppose
that $p_{1}+p_{2}\neq 0,$ otherwise we have $\mu (G)\leq \frac{n}{3}$. As $\Delta
\geq 2\delta $, we get $\mu (G)\leq \dfrac{\left( \Delta -\delta \right) n}{%
\left( \Delta +\delta \right) }$ and the problem is resolved.\newline
Let $V_{1}$ be the set of isolated vertices of $\mathcal{P}$ and $V_{2}$ be
the set of end vertices of the isolated edges of $\mathcal{P}$. We denote by 
$R$ any path in $\mathcal{P}$, and we write $R=R[a,b]=[a,...,b]$ if $a$ and $%
b$ are the end vertices of $R$. We set $End(R)={\{a,b\}.}$ Let $Int\left(
R\right) $ be the set of internal vertices of $R$. Let $%
\mathcal{A}$ $\subseteq {\mathcal{P}}$. We denote by $Int(\mathcal{A})$
(resp. $End(\mathcal{A})$) the set of internal (resp. end ) vertices of the
paths of $\mathcal{A}$. For $i$ fixed, we denote by $R_{i}$ any path of
order $i$. We set ${\mathcal{R}}_{i}$ the set of paths of order $i$. By $%
abcd$ or $[a,b,c,d] $ we denote a path with $4$ vertices. For $i$ odd, $%
i\geq 3$, let us set $C_{i}=\bigcup\limits_{R\in \mathcal{R}_{i}}c\left(
R\right) $,\text{\ }where $c\left( R\right) $ denotes the central vertex of
the path $R$.

\begin{example}$ $\newline
Let us illustrate the above notations relative to a partition in
Figure 1.\newline
\bigskip 
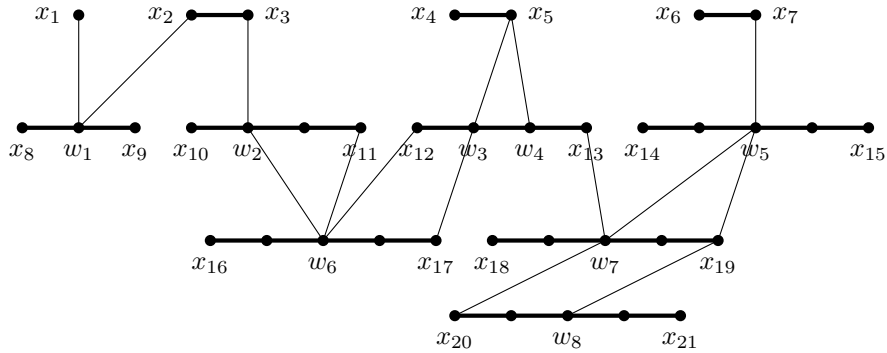
\begin{figure}[h]
\centering
\begin{tikzpicture}
\coordinate (a) at (0.75,4) ;
\coordinate (b) at (2.25,4) ;
\coordinate (c) at (3,4) ;
\coordinate (d) at (5.75,4) ;
\coordinate (e) at (6.5,4) ;
\coordinate (f) at (9,4) ;
\coordinate (g) at (9.75,4);
\coordinate (h) at (0,2.5);
\coordinate (i) at (0.75,2.5);
\coordinate (j) at (1.5,2.5);
\coordinate (k) at (2.25,2.5) ;
\coordinate (l) at (3,2.5) ;
\coordinate (m) at (3.75,2.5) ;
\coordinate (n) at (4.5,2.5) ;
\coordinate (o) at (5.25,2.5) ;
\coordinate (p) at (6,2.5) ;
\coordinate (q) at (6.75,2.5);
\coordinate (r) at (7.5,2.5);
\coordinate (s) at (8.25,2.5) ;

\coordinate (a') at (9,2.5) ;
\coordinate (b') at (9.75,2.5) ;
\coordinate (c') at (10.5,2.5) ;
\coordinate (d') at (11.25,2.5) ;
\coordinate (e') at (2.5,1) ;
\coordinate (f') at (3.25,1) ;
\coordinate (g') at (4,1);
\coordinate (h') at (4.75,1);
\coordinate (i') at (5.5,1);
\coordinate (j') at (6.25,1);
\coordinate (k') at (7,1) ;
\coordinate (l') at (7.75,1) ;
\coordinate (m') at (8.5,1) ;
\coordinate (n') at (9.25,1) ;
\coordinate (o') at (5.75,0) ;
\coordinate (p') at (6.5,0) ;
\coordinate (q') at (7.25,0);
\coordinate (r') at (8,0);
\coordinate (s') at (8.75,0) ;

\draw  (a)--(i)--(b)--(c)--(l)--(g')--(n)--(m)--(l)--(k);
\draw  (h)--(i)--(j);
\draw  (e')--(f')--(g')--(h')--(i');

\draw  (g')--(o)--(p)--(q)--(r)--(l')--(o')--(p')--(q')--(r')--(s');
\draw  (d)--(e)--(p)--(i');
\draw  (e)--(q);
\draw  (j')--(k')--(l')--(m')--(n');
\draw  (q')--(n')--(b');

\draw  (l')--(b')--(g)--(f);
\draw  (s)--(a')--(b')--(c')--(d');
\draw [ultra thick] (b)--(c);
\draw [ultra thick] (d)--(e);
\draw [ultra thick] (f)--(g);
\draw [ultra thick] (h)--(i);
\draw [ultra thick] (i)--(j);
\draw [ultra thick] (k)--(l);
\draw [ultra thick] (l)--(m);
\draw [ultra thick] (m)--(n);
\draw [ultra thick] (o)--(p)--(q)--(r);
\draw [ultra thick] (s)--(a')--(b')--(c')--(d');
\draw [ultra thick] (e')--(f')--(g')--(h')--(i');
\draw [ultra thick] (j')--(k')--(l')--(m')--(n');
\draw [ultra thick] (o')--(p')--(q')--(r')--(s');

\draw (a) node[left=0.1cm]{$x_1$} ;
\draw (b) node[left=0.1cm]{$x_2$} ;
\draw (c) node[right=0.1cm]{$x_3$} ;
\draw (d) node[left=0.1cm]{$x_4$};
\draw (e) node[right=0.1cm]{$x_5$} ;
\draw (f) node[left=0.1cm]{$x_6$} ;
\draw (g) node[right=0.1cm]{$x_7$};
\draw (h) node[below=0.1cm]{$x_8$};
\draw (i) node[below=0.1cm]{$w_1$} ;
\draw (j) node[below=0.1cm]{$x_9$} ;
\draw (k) node[below=0.1cm]{$x_{10}$} ;
\draw (l) node[below=0.1cm]{$w_2$};

\draw (n) node[below=0.1cm]{$x_{11}$} ;
\draw (o) node[below=0.1cm]{$x_{12}$};
\draw (p) node[below=0.1cm]{$w_3$};
\draw (q) node[below=0.1cm]{$w_4$} ;
\draw (r) node[below=0.1cm]{$x_{13}$} ;
\draw (s) node[below=0.1cm]{$x_{14}$} ;

\draw (b') node[below=0.1cm]{$w_5$} ;

\draw (d') node[below=0.1cm]{$x_{15}$};
\draw (e') node[below=0.1cm]{$x_{16}$} ;

\draw (g') node[below=0.1cm]{$w_6$};

\draw (i') node[below=0.1cm]{$x_{17}$} ;
\draw (j') node[below=0.1cm]{$x_{18}$} ;

\draw (l') node[below=0.1cm]{$w_7$};

\draw (n') node[below=0.1cm]{$x_{19}$} ;
\draw (o') node[below=0.1cm]{$x_{20}$};

\draw (q') node[below=0.1cm]{$w_8$};

\draw (s') node[below=0.1cm]{$x_{21}$} ;
\foreach \i in {a,b,c,d,e,f,g,h,i,j,k,l,m,n,o,p,q,r,s,a',b',c',d',e',f',g',h',i',j',k',l',m',n',o',p',q',r',s'}
\draw [fill=black, thick] (\i) circle [radius=0.06];
\end{tikzpicture}
\caption{Illustration of the definitions}
\label{fig:G1}
\end{figure}
${\mathcal{R}}_{1}=\left\{ x_{1}\right\} ,$ ${\mathcal{R}}_{2}=\left\{ \left[
x_{2},x_{3}\right] ,\text{ }\left[ x_{4},x_{5}\right] ,\text{ }\left[
x_{6},x_{7}\right] \right\} ,$ \noindent ${\mathcal{R}}_{3}=\left\{ \left[
x_{8},.,x_{9}\right] \right\} $,\newline
${\mathcal{R}}_{4}=\left\{ \left[ x_{10},..,x_{11}\right] ,\left[
x_{12},..,x_{13}\right] \right\} ,$\newline
${\mathcal{R}}_{5}=\left\{ \left[ x_{14},..,x_{15}\right] ,\left[
x_{16},..,x_{17}\right] ,\left[ x_{18},..,x_{19}\right] ,\left[
x_{20},..,x_{21}\right] \right\} .$\newline
$End({\mathcal{R}}_{3})=\left\{ x_{8},x_{9}\right\} ,End({\mathcal{R}}%
_{4})=\left\{ x_{10},x_{11},x_{12},x_{13}\right\} ,$\newline
$End({\mathcal{R}}_{5})=\left\{
x_{14},x_{15},x_{16},x_{17},x_{18},x_{19},x_{20},x_{21}\right\} .$\newline
$C_{3}=\left\{ w_{3}\right\} ,$ $C_{5}=\left\{
w_{5},w_{6},w_{7},w_{8}\right\} .$\smallskip \newline
\end{example}

For $x\in End(R),$ $N_{ext}\left( x\right) $ is the set of non path
neighbors (neighbors of $x$ outside its own path $R$) and $N_{ext}(X^{\prime
})=\bigcup\limits_{x\in X^{\prime }}N_{ext}(x)$ with $X^{\prime }\subset End(%
\mathcal{P}).$\text{\ } Now using $N_{ext}(V_{1}\cup V_{2})$, we define a subset $X
$ of $End(\mathcal{P})$ and we denote $N_{ext}(X)$ by $W.$ \noindent Let $%
X_{1}=V_{1}\cup V_{2},$ $W_{1}=N_{ext}\left( X_{1}\right) $ and for $t\geq 1,
$ $X_{t}$ being defined, let 
\begin{equation}
X_{t+1}=X_{t}\cup (\bigcup\limits_{N_{ext}(X_{t})\cap Int(R)\neq \emptyset ,R\in 
\mathcal{R}}End(R)).
\end{equation}
Let $s\geq 1$ the first integer such that $X_{s}=X_{s+1}$. Let
us set $X=X_{s},$ $W=N_{ext}\left( X\right) $ and for $t\in \{1,...,s\},$
let $W_{t+1}=N_{ext}\left( X_{t+1}\right) \backslash N_{ext}\left(
X_{t}\right) .$ Then $W=\bigcup\limits_{i=1}^{i=s}W_{i}.$ Here is an
example of that construction.

\begin{example}
For the partition in Figure \ref{fig:G1}, we have $X_{1}=%
\{x_{1},x_{2},...,x_{7}\}$, \newline
$X_{2}=X_{1}\cup \{x_{8},x_{9},...,x_{15}\}$, $X_{3}=X_{2}\cup
\{x_{16},x_{17},x_{18},x_{19}\}$, $X_{4}=X_{3}\cup \{x_{20},x_{21}\}=X$, $%
W_{1}=\{w_{1},w_{2},...,w_{5}\}$, $W_{2}=\{w_{6},w_{7}\}$, $W_{3}=\{w_{8}\}$
and $W=\{w_{1},w_{2},...,w_{8}\}$.\smallskip \newline
\end{example}

Let $X_{0}=\emptyset .$ Pick $w_{r}\in W_{r}$ for some $r$. By definition of 
$w_{r}$, there exists a sequence 
\begin{equation}
\alpha \left( w_{r}\right) =x_{1}w_{1},x_{2}w_{2},...,x_{r}w_{r},
\end{equation}%
where for each $t\in \left\{ 1,...,r\right\} $, $x_{t}\in X_{t}-X_{t-1}$,
$w_t\in W_t$
and $x_{t}w_{t}$ is an edge joining two paths of the partition. In addition,
for each $t\in \left\{ 1,...,r-1\right\} $, $w_{t}$ and $x_{t+1}$ are in the
same path of the partition. $\bigskip $

\emph{The sequence }$\alpha (w_{r})$\emph{\ has a good order} if the vertex $%
w_{r}$ belongs to a path $R$ with end vertices, say $x_{r+1}$ and $%
x_{r+1}^{\prime }$, in $X_{r+1}-X_{r}.$ \emph{The vertex }$w_{r}$\emph{\ is
then said to be of good order}. Using a sequence $\alpha (w_{r})$ with \emph{good
order}, we can define two new partitions as follows.\medskip

For each $i\in \{1,...,r+1\},$ we orient the paths of $\mathcal{P}$ such
that each $x_{i}$ is the terminal extremity. We denote\ by\ $w_{t}^{+}$ and $%
w_{t}^{-}$ the successor and the predecessor of $w_{t},$ respectively.%
\medskip

1) $\mathcal{P}_{1}\left( w_{r}\right) $ is obtained from $\mathcal{P}$ by
deleting the edges $w_{t}w_{t}^{+},$ $1\leq t\leq r$ and adding the edges $%
x_{t}w_{t}$ for $1\leq t\leq r;$

2) $\mathcal{P}_{2}\left( w_{r}\right) $ is obtained from $\mathcal{P}$ by
deleting the edges $w_{t}w_{t}^{+},$ $1\leq t\leq r-1$ and the edge $%
w_{r}w_{r}^{-}$ and adding the edges $x_{t}w_{t}$ for $1\leq t\leq r$%
.\bigskip \newline
If we consider the sets of edges of these partitions we note that\ 
\begin{equation*}
E(\mathcal{P}_{2})=(E(\mathcal{P}_{1})-w_{r}w_{r}^{-})\cup w_{r}w_{r}^{+}.
\end{equation*}%
Furthermore, $|\mathcal{P}_{2}|=|\mathcal{P}_{1}|=\mu (G).$\newline
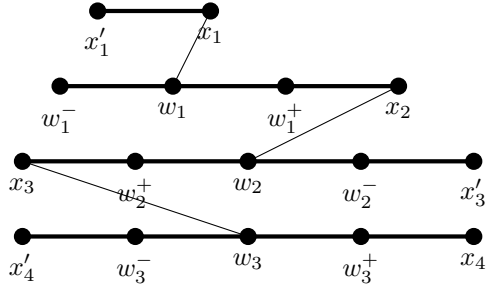
\begin{figure}[h]
\centering
\begin{tikzpicture}
\coordinate (a) at (-1,2) ;
\coordinate (b) at (0.5,2) ;
\coordinate (c) at (-1.5,1) ;
\coordinate (d) at (0,1) ;
\coordinate (e) at (1.5,1) ;
\coordinate (f) at (3,1) ;
\coordinate (g) at (-2,0);
\coordinate (h) at (-0.5,0);
\coordinate (i) at (1,0);
\coordinate (j) at (2.5,0) ;
\coordinate (k) at (4,0) ;
\coordinate (l) at (-2,-1) ;
\coordinate (m) at (-0.5,-1) ;
\coordinate (n) at (1,-1) ;
\coordinate (o) at (2.5,-1) ;
\coordinate (p) at (4,-1) ;

\draw (a) node[below=0.1cm]{$x'_1$} ;
\draw (b) node[below=0.1cm]{$x_1$} ;
\draw (c) node[below=0.1cm]{$w^-_1$} ;
\draw (d) node[below=0.1cm]{$w_1$};
\draw (e) node[below=0.1cm]{$w^+_{1}$} ;
\draw (f) node[below=0.1cm]{$x_{2}$} ;
\draw (g) node[below=0.1cm]{$x_3$};
\draw (h) node[below=0.1cm]{$w^+_{2}$};
\draw (i) node[below=0.1cm]{$w_2$} ;
\draw (j) node[below=0.1cm]{$w^-_2$} ;
\draw (k) node[below=0.1cm]{$x'_3$} ;
\draw (l) node[below=0.1cm]{$x'_4$};
\draw (m) node[below=0.1cm]{$w^-_{3}$} ;
\draw (n) node[below=0.1cm]{$w_{3}$} ;
\draw (o) node[below=0.1cm]{$w^+_3$};
\draw (p) node[below=0.1cm]{$x_{4}$};

\draw  (a)--(b)--(d);
\draw  (c)--(d)--(e)--(f)--(i);
\draw  (g)--(h)--(i)--(j)--(k);
\draw  (g)--(n);
\draw  (l)--(m)--(n)--(o)--(p);

\draw [ultra thick] (a)--(b);
\draw [ultra thick] (c)--(d)--(e)--(f);
\draw [ultra thick] (g)--(h)--(i)--(j)--(k);
\draw [ultra thick] (l)--(m)--(n)--(o)--(p);
\foreach \i in {a,b,c,d,e,f,g,h,i,j,k,l,m,n,o,p}
\draw [fill=black, thick] (\i) circle [radius=0.1];
\end{tikzpicture}
\caption{Graph with $\protect\mu (G)=4$}
\label{fig:G2}
\end{figure}
For example, the sequence $\alpha (w_{2})$ in the graph of Figure \ref%
{fig:G2}, defines two partitions.\\
We have $\mathcal{P}_{1}\left(
w_{2}\right) =\left\{ x_{1}^{\prime
}x_{1}w_{1}w_{1}^{-},w_{1}^{+}x_{2}w_{2}w_{2}^{-}x_{3}^{\prime
},x_{3}w_{2}^{+},x_{4}^{\prime }w_{3}^{-}w_{3}w_{3}^{+}x_{4}\right\} $ and $%
\mathcal{P}_{2}\left( w_{2}\right) =\left\{ x_{1}^{\prime
}x_{1}w_{1}w_{1}^{-},w_{1}^{+}x_{2}w_{2}w_{2}^{+}x_{3},w_{2}^{-}x_{3}^{%
\prime },x_{4}^{\prime }w_{3}^{-}w_{3}w_{3}^{+}x_{4}\right\} .\smallskip $%
\newline

 We denote by $R_{i}[x^{\prime },x]$ any path of order $i$
oriented from $x^{\prime }$ to $x.$ So $x^{\prime }$ is the initial end of $%
R_{i}$ and $x$ is its terminal end.

\begin{ob}
\label{ob 1}If $w\in W_{t},$ then for some $i,$ $w$ belongs to
some path $R_{i}\left[ x^{\prime },x\right] .$  The path $R_{i_{1}}%
\left[ w^{+},x\right] $ is in $\mathcal{P}_{1}\left( w\right) $ and the path 
$R_{i_{2}}\left[ x^{\prime },w^{-}\right] $ is in $\mathcal{P}_{2}\left(w\right) .$
Note that the subpath $R_{i_{1}}\left[ w^{+},x\right] $ (resp. $R_{i_{2}}%
\left[ x^{\prime },w^{-}\right] )$ is of order $i_{1}$ (resp. $i_{2})$ such
that $i_{1}+i_{2}+1=i.\bigskip $
\end{ob}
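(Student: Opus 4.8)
The plan is to prove both assertions by directly unwinding the definitions of $\mathcal{P}_{1}(w)$ and $\mathcal{P}_{2}(w)$ and bookkeeping exactly which edges incident to the path carrying $w$ are deleted and which are added. Write $w=w_{r}$ and use the good-order sequence $\alpha(w_{r})=x_{1}w_{1},\dots,x_{r}w_{r}$. First I would fix the path $R$ of $\mathcal{P}$ containing $w$. By the good order of $w_{r}$, the two ends of $R$ lie in $X_{r+1}-X_{r}$; orienting $R$ so that one of them, call it $x$, is terminal and the other, $x'$, is initial produces the path $R=R_{i}[x',x]$ of the statement. Since $w$ has both a predecessor and a successor along $R$ (its two ends lying in $X_{r+1}-X_{r}$ are distinct from $w$), $w$ is an interior vertex and $w^{-},w^{+}$ are well defined.

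The crucial preliminary step is to show that, inside $R$, the only vertex of the form $w_{t}$ is $w_{r}$ itself, and that neither end of $R$ lies in $\{x_{1},\dots,x_{r}\}$. Both facts rest on the layered construction: the sets $X_{t}-X_{t-1}$ are pairwise disjoint, and for $t\le r-1$ the vertices $w_{t}$ and $x_{t+1}$ lie in a common path whose terminal end under the orientation convention is $x_{t+1}\in X_{t+1}-X_{t}$. If some $w_{t'}$ with $t'<r$ also lay in $R$, then $x_{t'+1}$ would be the terminal end of $R$ as well, forcing $x_{t'+1}=x$, which is impossible because $x\in X_{r+1}-X_{r}$ while $x_{t'+1}\in X_{t'+1}-X_{t'}$. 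In the same way, the ends of $R$ lie in $X_{r+1}-X_{r}$ and are therefore distinct from $x_{1},\dots,x_{r}\in X_{r}$.

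Granting this, the two claims follow at once. In $\mathcal{P}_{1}(w)$ one deletes $w_{t}w_{t}^{+}$ for $1\le t\le r$ and adds $x_{t}w_{t}$; the only such modifications meeting $R$ are the deletion of $w_{r}w_{r}^{+}=ww^{+}$ and the addition of $x_{r}w_{r}$ at $w$. Hence every edge of the segment from $w^{+}$ to the terminal end $x$ survives, no added edge touches it (its far end $x$ is not among $x_{1},\dots,x_{r}$), and it is a full path $R_{i_{1}}[w^{+},x]$ of $\mathcal{P}_{1}(w)$. Symmetrically, the edge of $R$ deleted in $\mathcal{P}_{2}(w)$ is $w_{r}w_{r}^{-}=ww^{-}$, so the segment from the initial end $x'$ to $w^{-}$ is a full path $R_{i_{2}}[x',w^{-}]$ of $\mathcal{P}_{2}(w)$. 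Finally, deleting $w$ splits the $i$ vertices of $R$ into the $i_{2}$ vertices of $[x',w^{-}]$, the single vertex $w$, and the $i_{1}$ vertices of $[w^{+},x]$, giving $i_{1}+i_{2}+1=i$.

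The step needing the most care is exactly this preliminary one: the observation looks like a mere reading of the definitions, but it is only correct once one knows that no other edge-modification of the construction disturbs the two segments. That in turn relies entirely on the disjointness of the layers $X_{t}-X_{t-1}$ and on each path hosting at most one marked endpoint $x_{i}$, which is precisely what makes the orientation convention consistent. With that secured, everything else is immediate from the definitions of $\mathcal{P}_{1}(w)$ and $\mathcal{P}_{2}(w)$.
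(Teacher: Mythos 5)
Your proposal is correct: the paper states this as an Observation with no written proof, treating it as immediate from the definitions of $\mathcal{P}_{1}(w)$ and $\mathcal{P}_{2}(w)$, and your argument is precisely the definitional bookkeeping being left implicit. The one point you rightly isolate as needing care --- that no deleted edge $w_{t}w_{t}^{+}$ with $t<r$ and no added edge $x_{t}w_{t}$ meets the path $R$ containing $w$, because $End(R)\subseteq X_{r+1}-X_{r}$ while all $x_{t},w_{t}$ with $t\leq r$ live in earlier layers --- is exactly the justification the paper's construction relies on, so your route is essentially the same as the paper's.
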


\section{Proof of Theorem \protect\ref{thm 1} :\protect\medskip}

We choose a minimum path-partition $\mathcal{P}_{0}$ such that\newline
1) $p_{1}$ is minimum, \newline
2) if (1) is satisfied, then $p_{2}$ is minimum. \medskip\ \newline
Let $\mathcal{P}^{\prime }$ be the set of paths with end vertices in $X$.
Let $p=|\mathcal{P}^{\prime }|.$ Let $p_{i}$ be the number of paths of order 
$i$ in $\mathcal{P}^{\prime }.$\newline

Sketch of the proof\newline
In view to bound $\mu (G)$ we want to bound $p_{1}$ and $p_{2}$. We consider
the set $X$ generated by $V_{1}\cup V_{2}$, and therefore the two sets $%
W=N_{ext}(X)$ and $\varepsilon (X,W)$. Note that the cardinality of $X$
is $2p-p_{1}$, The proof of our theorem is done through the following
steps. We want to bound in two manners the number of edges $\varepsilon
(X,W).$ The upper bound will use $W$ and $\Delta $, the lower bound will use 
$X$ and $\delta $.\newline
In the first part of the proof, we show some claims relative to the set $W$
and one relative to the lower bound of $\varepsilon (x,W)$ for $x\in X.$
%We show in particular that no vertex of $W$ belongs to a path of order at least 
%$6$, this will lead to a bound of $|W|$ in function of $p_{3},$ $p_{4}$ and $%
%p_{5}.$
In the second part of the proof, we calculate the bounds of $\varepsilon (X,W)$. We get finally an upper bound for $p_{1}+2p_{2}$ in
function of $p,\delta $ and $\Delta $, and, then an upper bound for $\mu (G)$%
.

\bigskip \textbf{A) Claims}

\begin{claim}
\label{cl:1} \mbox{} %\hfill

\begin{enumerate}
\item For each $v\in V_{1},$ $N(v)\subset C_{3}$.

\item For each $a\in V_{2},$ $N(a)\subset C_{3}\cup Int(\mathcal{R}_{4})\cup
C_{5}.$ %\end{itemize}
\end{enumerate}

So, $\left\{ 
\begin{array}{c}
\left\vert N\left( a\right) \cap R\right\vert \leq 1,\text{ for every path }R%
\text{ of order }3\text{ or }5. \\ 
\left\vert N\left( a\right) \cap R\right\vert \leq 2,\text{ \ \ \ \ \ \ for
every path }R\text{ of order }4.%
\end{array}%
\right. .$
\end{claim}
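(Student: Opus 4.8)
The plan is to prove both inclusions by the same device: a local rerouting of the minimum partition $\mathcal{P}_0$ which, were a forbidden neighbour to exist, would either lower $\mu(G)$ or contradict one of the two minimality conditions ($p_1$ minimum, then $p_2$ minimum). Every modification I use rearranges edges only inside the union of one path $R$ with the path of the vertex under consideration, so it always yields another partition of the same cardinality $\mu(G)$, hence still a minimum one; the whole force of the argument then comes from tracking how the counts $p_1$ and $p_2$ change.

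For part (1), take $v\in V_1$ and $u\in N(v)$. First I rule out the easy positions of $u$: if $u$ were isolated then $[u,v]$ merges two paths into one, and if $u$ were an end vertex of a path of order $\geq 2$ then appending $v$ merges that path with $\{v\}$; either way $\mu(G)$ drops, a contradiction. Hence $u=x_i$ is internal to some $R=[x_0,\dots,x_k]$ with $1\leq i\leq k-1$. I split $R$ at $u$ in the two possible ways and reattach $v$: deleting $x_ix_{i+1}$ and adding $vx_i$ leaves a leftover $[x_{i+1},\dots,x_k]$ of order $k-i$, while deleting $x_{i-1}x_i$ and adding $vx_i$ leaves $[x_0,\dots,x_{i-1}]$ of order $i$. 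Each move removes $v$ from $V_1$, and it creates a new isolated vertex only when the leftover has order $1$, i.e. when $i=k-1$ (first move) or $i=1$ (second move). So unless $i=1=k-1$ one of the two moves strictly decreases $p_1$, contradicting the choice of $\mathcal{P}_0$. The surviving case $i=1=k-1$ forces $k=2$, so $R$ has order $3$ and $u$ is its centre; that is, $N(v)\subseteq C_3$.

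Part (2) follows the same scheme for the external neighbours of $a$, the path-partner $b$ being irrelevant to the three target sets. Let $a\in V_2$ lie on the isolated edge $[a,b]$ and let $u\in N_{ext}(a)$. As before $u$ can be neither isolated nor an end vertex of another path, so $u=x_i$ is internal to some $R=[x_0,\dots,x_k]$. This time I glue the whole edge $[a,b]$ onto one of the two subpaths obtained by splitting $R$ at $u$: deleting $x_ix_{i+1}$ and adding $ax_i$ produces $[b,a,x_i,\dots,x_0]$ together with the leftover $[x_{i+1},\dots,x_k]$ of order $k-i$, and symmetrically for deleting $x_{i-1}x_i$. Both moves destroy the isolated edge $[a,b]$, hence tend to lower $p_2$, and they keep $p_1$ fixed provided no leftover has order $1$. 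The crucial computation is the effect on $p_2$: a leftover of order $2$ is itself a new isolated edge, so $p_2$ is unchanged (a harmless borderline case), but a leftover of order $\geq 3$ creates nothing small and forces a net decrease of $p_2$ with $p_1$ fixed, contradicting minimality. Thus the first move is admissible only when $k-i\leq 2$ and the second only when $i\leq 2$; imposing both gives $i\leq 2$ and $k-i\leq 2$. Enumerating leaves exactly $u$ the centre of an order-$3$ path ($k=2$), an internal vertex of an order-$4$ path ($k=3$), or the centre of an order-$5$ path ($k=4$), i.e. $N_{ext}(a)\subseteq C_3\cup Int(\mathcal{R}_4)\cup C_5$.

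The displayed inequalities are then immediate bookkeeping: a path of order $3$ or $5$ contributes only its single centre to $C_3\cup Int(\mathcal{R}_4)\cup C_5$, so it can host at most one neighbour of $a$, whereas a path of order $4$ contributes its two internal vertices, giving at most two. I expect the only delicate point to be the $p_2$-accounting in part (2): one must notice that leftovers of order exactly $1$ or $2$ yield no contradiction, since these are precisely the surviving configurations, so the case analysis has to be steered to land exactly on $C_3$, $Int(\mathcal{R}_4)$ and $C_5$ rather than on a larger family.
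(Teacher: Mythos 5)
Your proof is correct and takes essentially the same route as the paper: split the host path at the neighbour $w$ in the two possible directions, absorb the order-$1$ vertex (resp.\ the order-$2$ edge), and derive a contradiction first with the minimality of $\mu$, then of $p_1$, then of $p_2$, which forces each residual subpath to have order at most two and leaves exactly $C_3$, $Int(\mathcal{R}_4)$ and $C_5$. Your leftover-length bookkeeping (order $\geq 3$ gives the contradiction, orders $1$ and $2$ are the surviving borderline cases) is precisely the paper's argument.
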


\begin{proof}[Proof of Claim \ref{cl:1}]$ $\newline
%\hfill 
1) Let $v$ be a vertex of $V_{1}$. By the minimality of $\mathcal{P}_{0}$, $v
$ is not adjacent to an end vertex of another path in $\mathcal{P}_{0}$. Let 
$w$ be a neighbor of $v$ in a path %$ R_{i}[ x^{\prime }, x]$
oriented from $x^{\prime }$ to $x.$ We have two partitions.
In $\mathcal{P}_{0}$, we replace the path $v$ and the path $R_{i}\left[
x^{\prime },x\right] $ either by the pair of paths $vR_{i_{1}}\left[ w,x%
\right] ,$ $R_{i_{2}}\left[ x^{\prime },w^{-}\right] $ or by the pair of
paths $R_{i_{1}^{\prime }}\left[ x^{\prime },w\right] v,$ $R_{i_{2}^{\prime
}}\left[ w^{+},x\right] .$ By the minimality of $p_{1}$, w is both  predecessor of x and successor of $x^{\prime}$. So the order of $R_{i}\left[
x^{\prime },x\right] $ is $3,$ and $w$ is the center of $R_{i}\left[
x^{\prime },x\right] $. Thus $N\left( v\right) \subset C_{3}.$\newline
2) Let $w$ be a neighbor of $a$ in $R_{i}\left[ x^{\prime },x\right] .$
As precedently, we get two partitions and by definition of $\mathcal{P}_{0}$, each of $%
R_{i}\left[ x^{\prime },w^{-}\right] $ and $R_{i}\left[ w^{+},x\right] $
should be of order at most two. So $N\left( a\right) \subset C_{3}\cup
C_{5}\cup Int\left( \mathcal{R}_{4}\right) ,$ completing the
proof of Claim \ref{cl:1} %$\blacksquare $ 
\end{proof}%\medskip

\begin{claim}
\label{cl:2} Let $W_{a}$ be the set of vertices of good order in $W.$ Then

\begin{enumerate}
\item $W_{a}\subset C_{3}\cup Int\left( \mathcal{R}_{4}\right) \cup C_{5}.$

\item $W=W_{a}.$
\end{enumerate}
\end{claim}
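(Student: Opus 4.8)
The plan is to handle both parts through the paired partitions $\mathcal{P}_1(w_r),\mathcal{P}_2(w_r)$ attached to a good-order sequence, together with the lexicographic minimality of $\mathcal{P}_0$ (first $p_1$, then $p_2$). I would first record a structural fact used in both parts: every $w\in W$ is an interior vertex of its path. Indeed, if $w\in W_r$ then $w\in N_{ext}(X_r)$, so $w$ has a neighbour $x\in X_r$ on another path; since $X\subseteq End(\mathcal{P})$ this $x$ is a path-endpoint, and were $w$ an endpoint of its own path too, the edge $wx$ would concatenate the two paths into one and lower $\mu(G)$, contradicting minimality. Hence $w\in Int(R)$, and because $w\in N_{ext}(X_r)\cap Int(R)$ the construction places both ends of $R$ into $X_{r+1}$.

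For part $1$ I would take a good-order vertex $w=w_r$, form $\mathcal{P}_1(w_r)$ and $\mathcal{P}_2(w_r)$, and note that good order is precisely what makes these legitimate path-partitions with $|\mathcal{P}_i|=\mu(G)$, since the recombination terminates at the freshly added ends of $R$ and closes no cycle. By Observation \ref{ob 1} the only short residual piece is $R_{i_1}[w^+,x]$ in $\mathcal{P}_1$ (and $R_{i_2}[x',w^-]$ in $\mathcal{P}_2$), while interiority of every chain vertex $w_t$ forces each recombined path to have order at least $3$. Comparing $\mathcal{P}_1(w_r)$ with $\mathcal{P}_0$, the net change in $(p_1,p_2)$ is the absorption of the starting object $x_1\in V_1\cup V_2$ against the creation of the residual piece of order $i_1$: if $x_1\in V_1$, keeping $p_1$ from dropping forces $i_1=1$; if $x_1\in V_2$, then $i_1\ge 3$ would leave $p_1$ fixed while lowering $p_2$, contradicting minimality, so $i_1\le 2$. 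The symmetric count in $\mathcal{P}_2(w_r)$ gives $i_2\le 2$. With $w$ interior and $i=i_1+i_2+1\le 5$, the vertex $w$ must be the centre of an order-$3$ or order-$5$ path, or interior to an order-$4$ path, i.e. $w\in C_{3}\cup Int(\mathcal{R}_{4})\cup C_{5}$.

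For part $2$ only the inclusion $W\subseteq W_a$ needs proof, i.e. that for $w\in W_r$ both ends of $R$ lie in $X_{r+1}\setminus X_r$ rather than already in $X_r$. I would argue by contradiction: if an end of $R$ were in $X_r$, then $R$ would carry an interior vertex $w''$ activated at an earlier stage, which, being of good order, satisfies part $1$ and forces $R$ to have order $3$, $4$ or $5$. Running through these orders, with $w$ and $w''$ two distinct interior vertices and $w$ lagging, yields an explicit local configuration on $R$ together with the external edges at $w$ and $w''$; the associated exchange produces a partition with $p_1$ unchanged and $p_2$ strictly smaller (or with $p_1$ strictly smaller), contradicting the minimality of $\mathcal{P}_0$. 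Hence no end of $R$ lies in $X_r$, so $w$ is of good order and $W=W_a$.

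I expect the genuine obstacle to be this last step: ruling out a lagging interior vertex of $W$ whose path-ends were inserted into $X$ at an earlier stage, where pinning down the improving exchange in the order-$4$ and order-$5$ cases is where the delicate control of $p_1$ and $p_2$ is concentrated. A secondary point to check with care is that, for good-order sequences, the rerouting defining $\mathcal{P}_1$ and $\mathcal{P}_2$ never closes a cycle, so that $|\mathcal{P}_1|=|\mathcal{P}_2|=\mu(G)$ indeed holds; granting these two points, everything else reduces to the routine bookkeeping via Observation \ref{ob 1}.
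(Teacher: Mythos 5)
Your part 1 is essentially the paper's argument: via Observation \ref{ob 1}, if $i_{1}\geq 3$ then $\mathcal{P}_{1}(w_r)$ absorbs the starting path of $x_1\in V_1\cup V_2$ while creating only the residual piece of order $i_1\geq 3$, so either $p_1$ drops or $p_1$ stays and $p_2$ drops, contradicting the lexicographic choice of $\mathcal{P}_0$; symmetrically $i_2\leq 2$, and with $w$ interior this places $w$ in $C_3\cup Int(\mathcal{R}_4)\cup C_5$. That bookkeeping, including your preliminary observation that every $w\in W$ is interior to its path, is correct and is what the paper does.

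For part 2, however, you have only reproduced the setup and not the proof: you correctly locate a good-order vertex $w''$ (the paper's $w_a$) on the same path $R'$ as the lagging vertex $w$, apply part 1 to constrain $|R'|\in\{3,4,5\}$ (order $3$ being impossible since it has a single interior vertex), but then you defer the actual contradiction to an unspecified ``associated exchange'' controlled by $p_1$ and $p_2$ --- and you explicitly flag this as the step you cannot pin down. This is precisely the gap. The missing observation is that in each remaining case ($|R'|=4$, or $|R'|=5$ with $w_a$ the centre) the two interior vertices $w$ and $w_a$ are \emph{consecutive} on $R'$, and they carry external edges $x_r w$ and $x'' w_a$ to end vertices of two \emph{other} paths $R$ and $R''$, which are distinct from each other because $x_r\in X_r\setminus X_{r-1}$ while $x''\in X_{j-1}\subseteq X_{r-1}$. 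Deleting the edge $w w_a$ and adding $x_r w$ and $x'' w_a$ replaces the three paths $R, R', R''$ by two paths, so the total number of paths drops below $\mu(G)$. The contradiction is therefore with the minimality of $|\mathcal{P}_0|$ itself; no delicate $p_1/p_2$ accounting is needed, and the ``obstacle'' you anticipate dissolves once the adjacency of $w$ and $w_a$ and the distinctness of $R$ and $R''$ are noted. As written, your argument for $W=W_a$ is incomplete.
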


\begin{proof}[Proof of Claim \ref{cl:2}]$ $\newline
%\mbox{}
1) Suppose that there exists $w\in W_{a}$ such that $w$ is in the path $R_{i}%
\left[ x^{\prime },x\right] .$ By Observation \ref{ob 1}, we have $%
i-1=i_{1}+i_{2}$. If $i_{1}\geq 3$ (resp. $i_{2}\geq 3)$, then $\mathcal{P}%
_{1}\left( w\right) $ (resp. $\mathcal{P}_{2}\left( w\right) $) contains $%
p_{1}-1$ paths of order $1$ or $p_{1}$ paths of order $1$ and $p_{2}-1$
paths of order $2$. A contradiction with the definition of $\mathcal{P}_{0}.$
Thus $W_{a}\subset C_{3}\cup Int\left( \mathcal{R}_{4}\right) \cup C_{5}.$
\newline

2) Suppose that $W\neq W_{a}.$ In $W_{b}=W\backslash W_{a}$ there exists
necessarely a vertex $w=w_{r}$ with sequence $\alpha \left(
w\right) =x_{1}w_{1},x_{2}w_{2},...,x_{r}w_{r}$ with $x_{t}\in X_{t}-X_{t-1}$
 $x_{r}$ is an end vertex of some path $R=\left[ x_{r}^{\prime },...,x_{r}%
\right] .$ By the definition of $w$, the vertex $w$ belongs to a path $R^{\prime }= [x_{j}^{\prime },...,x_{j} ]$ with $x_{j}\in X_{j},$
$j\leq r.$ By the definition of $X_{j},$ the path $R^{\prime }$
contains one element of $W_{a}$, say $w_{a}.$ By the definition of $W_{a}$, $w_{a}$ is
adjacent to a vertex $x^{\prime \prime }$ of $X_{j-1},$ end vertex of a path 
$R^{\prime \prime }$. Since $W_{a}\subset C_{3}\cup Int\left( \mathcal{R}_{4}
\right) \cup C_{5},$ then $\left\vert R^{\prime }\right\vert =4$ or $5$
and $w=w_{a}^{-}$ or $w=w_{a}^{+}.$ The end vertex $x_{r}\in X_{r}$ and the
end vertex $x^{\prime \prime }\in X_{j-1}$ are adjacent respectively to $w$
and $w_{a}$, successive vertices of the same path $R^{\prime }$.

\begin{figure}[h]
\centering
\begin{tikzpicture}
\coordinate (a) at (-1,0) ;
\coordinate (b) at (0,0) ;
\coordinate (c) at (1,0) ;
\coordinate (d) at (2,0) ;
\coordinate (e) at (3,0) ;
\coordinate (f) at (2,-2) ;
\coordinate (g) at (3,-2);
\coordinate (h) at (4,-2);
\coordinate (i) at (5,-2);
\coordinate (j) at (4,0) ;
\coordinate (k) at (5,0) ;
\coordinate (l) at (6,0) ;
\coordinate (m) at (7,0) ;
\coordinate (n) at (8,0) ;
\draw (a) node[above=0.1cm]{$x'_r$} ;
\draw (e) node[above=0.1cm]{$x_r$} ;
\draw (f) node[below=0.1cm]{$x'_j$} ;
\draw (i) node[below=0.1cm]{$x_j$};
\draw (j) node[above=0.1cm]{$x'_{j-1}$} ;
\draw (n) node[above=0.1cm]{$x''$} ;
\draw (g) node[below=0.1cm]{$w$};
\draw (h) node[below=0.1cm]{$w_{a}$};
\draw (1,0.7) node {$ R $};
\draw (6,0.7)node {$R'' $};
\draw (3.5,-2.7) node {$R'$};
\draw  (a)--(b)--(c)--(d)--(e);
\draw  (j)--(k)--(l)--(m)--(n);
\draw  (f)--(g)--(h)--(i);
\draw  (e)--(g);
\draw  (h)--(n);
\foreach \i in {a,b,c,d,e,f,g,h,i,j,k,l,m,n}
\draw [fill=black, thick] (\i) circle [radius=0.1];
\end{tikzpicture}
\caption{Paths $R$, $R'$, $R''$}
\label{fig:G3}
\end{figure}
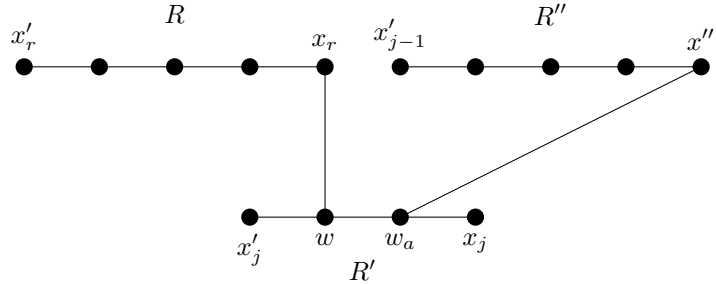

We get a partition with $p-1$ paths. We replace the three paths $R,$ $%
R^{\prime }$ and $R^{\prime \prime }$ by the two paths composed by $R\cup
R^{\prime }\cup R^{\prime \prime }\cup \{x_{r}w,x''w_{a}\}-~\{ww_{a}\}$
(see Figure \ref{fig:G3}), a contradiction with the minimality of $\mathcal{P}%
_{0}.$ Thus, $W_{b}=\emptyset $ and so, $W=W_{a}.$ This completes the proof of Claim \ref{cl:2}. %$ %\blacksquare$
\end{proof}

\begin{claim}
\label{cl:3} For each path $R$ of order $4$ in $\mathcal{P}%
_{0}, $ we have $\left\vert W\cap V\left( R\right) \right\vert \leq 2.$
Furthermore if $\left\vert W\cap V\left( R\right) \right\vert =2$, then
there is a unique $x\in X$ such that $W\cap V\left( R\right) \neq \emptyset $
and thus $\varepsilon (X,R)=4$
\end{claim}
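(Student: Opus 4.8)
The plan is to derive everything from Claim~\ref{cl:2} together with the minimality of $\mathcal{P}_0$, using repeatedly one elementary principle: in a minimum partition no edge joins the end vertices of two \emph{distinct} paths, since such an edge would let us merge the two paths and lower $\mu(G)$. For the first assertion, Claim~\ref{cl:2} gives $W=W_a\subseteq C_3\cup Int(\mathcal{R}_4)\cup C_5$. The sets $C_3$ and $C_5$ consist of centres of paths of order $3$ and $5$, so they are disjoint from $V(R)$; hence any vertex of $W$ lying on the order-$4$ path $R$ must lie in $Int(\mathcal{R}_4)\cap V(R)=Int(R)$. Since $|Int(R)|=2$, this yields $\left\vert W\cap V(R)\right\vert \leq 2$.

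Now I would assume equality and write $R=[x',a,b,x]$ with internal vertices $a,b\in W$. A preliminary remark is that $x',x\in X$: because $a\in W=N_{ext}(X)$ lies in $Int(R)$, some $X_t$ satisfies $N_{ext}(X_t)\cap Int(R)\neq\emptyset$, so the defining recursion $(1)$ places $End(R)=\{x',x\}$ into $X_{t+1}\subseteq X$. The heart of the proof is to show that $a$ and $b$ have a single common neighbour in $X$ outside $R$, which will be the announced unique $x$. Since $a,b\in N_{ext}(X)$, each has at least one external neighbour in $X$. Suppose $a$ is adjacent to $u\in X$ and $b$ to $v\in X$ with $u\neq v$; both lie off $R$, hence are ends of paths different from $R$. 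Deleting the edge $ab$ splits $R$ into $[x',a]$ and $[b,x]$, and the edges $au$, $bv$ then attach these halves to the paths ending at $u$ and at $v$. A short case check (according to whether $u$ and $v$ lie on the same path or on two different ones) shows this always drops the total number of paths by one, contradicting minimality. Therefore every external $X$-neighbour of $a$ coincides with every external $X$-neighbour of $b$, so $a$ and $b$ share a unique external neighbour $y\in X$ and have no other neighbour in $X$ outside $R$.

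It then remains to compute $\varepsilon(X,R)$. The only vertices of $R$ that lie in $X$ are its ends $x',x$ (internal vertices are never ends, so $a,b\notin X$). Consequently the edges between $X$ and $V(R)$ are the two path edges $x'a$ and $bx$, together with the two external edges $ya$ and $yb$, giving $\varepsilon(X,R)=4$, provided no further edge occurs. I would exclude the remaining possibilities as follows. An external edge from $x'$ or $x$ to a vertex of $X$ would join two end vertices of distinct paths, which is impossible by minimality. A chord of $R$ incident to $x'$ or $x$ — namely $x'x$, $x'b$ or $xa$ — can be used to re-route $R$ into a path on the same four vertices in which $a$ (or $b$) becomes an \emph{end}: for instance $x'x$ gives $[a,x',x,b]$, $x'b$ gives $[a,x',b,x]$, and $xa$ gives $[x',a,x,b]$. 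In each case the edge $ya$ (resp.\ $yb$) now joins two ends of the distinct paths $R$ and the path of $y$, so merging lowers $\mu(G)$, a contradiction. Hence none of these extra edges exists and $\varepsilon(X,R)=4$.

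The step I expect to be the main obstacle is this final edge count: one has to eliminate, one at a time, each potential additional incidence between $X$ and $V(R)$ — the three chords and the external $X$-edges at the two end vertices — and each elimination needs its own small re-routing argument resting on the same ``merging lowers $\mu(G)$'' principle. Keeping that case analysis exhaustive, and in particular verifying that every re-routed object is genuinely a path (and not a cycle, so that a legitimate merge is available), is the delicate part; the uniqueness of $y$ in the previous paragraph is comparatively clean once the splitting-and-reattaching idea is set up.
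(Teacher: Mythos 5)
Your proof is correct and follows essentially the same route as the paper's: the split-and-reattach argument showing the two internal vertices share a unique external neighbour in $X$, and the re-routing/merging arguments eliminating the chords, are exactly the steps used there. If anything you are slightly more thorough, since you also explicitly rule out the chord $x'x$ and external $X$-edges at the end vertices of $R$, which the paper leaves implicit.
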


%Proof of Claim $3$:\newline
\begin{proof}[Proof of Claim \ref{cl:3}]$ $\newline
By the minimality of $\mathcal{P}_{0}$ for each path $R\in \mathcal{R}_{4}$,
we have $\left\vert W\cap V\left( R\right) \right\vert \leq 2$. Now, assume
that there exists a path $R\in \mathcal{R}_{4}$ such that $W\cap V\left(
R\right) =\left\{ w_{1},w_{2}\right\} $ where $w_{i}\in N_{ext}\left(
x_{i}\right) ,$ $i=1,$ $2.$ By taking off the edge $w_{1}w_{2}$ and adding
the edges $x_{1}w_{1}$ and $x_{2}w_{2}$ we obtain a partition with $p-2$
paths, a contradiction. So $x_{1}=x_{2}.$ Let $R^{\prime }(x_{1})$ be the
path of extremity $x_{1}$ in the partition. We may suppose $R=[x_{0}^{\prime
},w_{1},w_{2},x_{0}]$. If $x_{0}$ is neighbor of $w_{1}$, then we replace
the two paths $R$ and $R^{\prime }(x_{1})$ by the path $R^{\prime
}(x_{1})\cup \lbrack w_{2}, x_{0},w_{1},x'_{0}]$. We get a partition with $p-1$
paths, a contradiction. So, there is no edge $x_{0}w_{1}$. Similarly, there
is no edge $x_{0}^{\prime }w_{2}$. It follows that $\varepsilon (X,R)=4$, {%
 completing the proof of Claim \ref{cl:3}} 
\end{proof}%\bigskip 
For $i\in \left\{ 1,2\right\} ,$ let $\mathcal{R}_{4}^{i}$ be the set of
paths of order $4$, which contain exactly $i$ elements of $W$.\newline
For the lower bound we shall need the following claim.

\begin{claim}
\label{cl:4} \mbox{}

\begin{enumerate}
\item If $x\in X\cap V(\mathcal{R}_{1}\cup \mathcal{R}_{3})$, then $x$ has $%
d(x) $ neighbors in $W$.

\item If $x_{1},$ $x_{2}\in X\cap V(\mathcal{R}_{4}),$ then the
set $\{x_{1},x_{2}\}$ has $d(x_{1})+d(x_{2})-1$ neighbors in $W$ if $x_{1},$ 
$x_{2}$ belong to ${V}\left( {\mathcal{R}_{4}^{1}}\right) ${\ and it has $%
d(x_{1})+d(x_{2})$ neighbors if $x_{1},$ $x_{2}$ belong to }${V}\left( {%
\mathcal{R}_{4}^{2}}\right) $.

\item If $x\in X\cap V(\mathcal{R}_{2}\cup \mathcal{R}_{5})$, then $x$ has $%
d(x)-1$ neighbors in $W$.
\end{enumerate}
\end{claim}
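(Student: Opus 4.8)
The plan is to reduce every count to the single identity
\[
|N(x)\cap W| = d(x) - |N(x)\cap V(R)\setminus W|,
\]
valid for any endpoint $x$ of its own path $R$, which holds because $N_{ext}(x)\subseteq W$ by the very definition of $W=N_{ext}(X)$. Counting neighbors of $x$ in $W$ therefore amounts to identifying which neighbors of $x$ \emph{inside its own path} fail to lie in $W$. Two structural facts drive this. First, by the construction of $X$, an endpoint of a path of order at least $3$ enters $X$ only when that path has an internal vertex in $W$; combined with Claim \ref{cl:2} ($W\subseteq C_{3}\cup Int(\mathcal{R}_{4})\cup C_{5}$), this forces the \emph{center} of an order-$3$ or order-$5$ path carrying an $X$-endpoint to lie in $W$, and at least one internal vertex of an order-$4$ path with $X$-endpoints to lie in $W$. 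Second, the only in-path neighbor of $x$ guaranteed to be present is its path-successor; every other in-path adjacency is a chord that I must rule out.

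I would treat the parts in increasing order of difficulty. For $\mathcal{R}_{1}$ the set $N(x)\cap V(R)$ is empty, so $|N(x)\cap W|=d(x)$ at once; for $\mathcal{R}_{2}$, $R=[x,x']$ with $x'\notin W$ by Claim \ref{cl:2}, giving exactly $d(x)-1$. For $\mathcal{R}_{3}$ the center $c\in W$ and the only harmful neighbor is the opposite end $x'$; for $\mathcal{R}_{5}=[x,a,c,b,x']$ the center $c\in W$, the path-neighbor $a\notin W$ accounts for the $-1$, and the harmful neighbors are $b$ and $x'$. In each case I would exclude the harmful chord by a merge: since the center is adjacent to some $y\in X$ lying outside $R$, a chord such as $xx'$ (order $3$), or $xx'$ or $xb$ (order $5$), lets me splice a Hamilton path of $R$ starting at $c$ onto $y$'s path — for order $5$ the walks $y,c,a,x,x',b$ and $y,c,a,x,b,x'$ — merging two paths into one and contradicting the minimality of $\mu(G)$.

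The genuinely delicate part is Part 2, the order-$4$ pairs, where I must control both endpoints at once and the union $|(N(x_{1})\cup N(x_{2}))\cap W|$ rather than a single neighborhood. Here I would lean on Claim \ref{cl:3}: in the $\mathcal{R}_{4}^{2}$ case it already yields $\varepsilon(X,R)=4$, which simultaneously rules out every endpoint-chord $x_{1}v,x_{2}u,x_{1}x_{2}$ and shows that $x_{1},x_{2}$ share no vertex of $W$ inside $R$, so $|N(x_{i})\cap W|=d(x_{i})$ and the total is $d(x_{1})+d(x_{2})$. In the $\mathcal{R}_{4}^{1}$ case only the internal vertex $u\in W$, so the other path-neighbor $v\notin W$ produces the single deficit $-1$, and I must still exclude the chords $x_{1}v,x_{2}u,x_{1}x_{2}$ as well as any external common neighbor of $x_{1}$ and $x_{2}$.

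The main obstacle, and the step I expect to need the most care, is excluding \emph{external common neighbors} of $x_{1}$ and $x_{2}$ (and the analogous chords in $\mathcal{R}_{4}^{1}$): a common neighbor $w\in W$ closes a short cycle through $R$, but unlike the center-merges above such a reroute need not drop $\mu(G)$. I would instead follow the chain $\alpha$ that certifies $x_{1},x_{2}\in X$ back to $V_{1}\cup V_{2}$, use the partitions $\mathcal{P}_{1}$ and $\mathcal{P}_{2}$ of the preliminaries to transport the rerouting to an isolated vertex or an isolated edge, and there convert the surplus edge into a decrease of $p_{1}$ or $p_{2}$, contradicting the lexicographic minimality of $\mathcal{P}_{0}$. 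Verifying that this transport is always available is where the bulk of the work lies.
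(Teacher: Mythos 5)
Your treatment of the orders $1,2,3,5$ (and the chord-exclusion merges through the centre's external neighbour) is essentially the paper's argument, spelled out in more detail than the paper itself. The genuine problem is in Part 2, precisely at the step you flag as the ``main obstacle'': you set out to prove that $x_{1}$ and $x_{2}$ have no \emph{external common neighbour} in $W$, so that the union $\left\vert (N(x_{1})\cup N(x_{2}))\cap W\right\vert$ equals the stated sum. That is not what the claim needs, and it is not something you can expect to prove: nothing in the minimality of $\mathcal{P}_{0}$ forbids the two ends of an order-$4$ path from both being adjacent to the same internal vertex of some \emph{other} path (rerouting through such a common neighbour merely trades one path configuration for another of the same cardinality, with no decrease of $\mu$, $p_{1}$ or $p_{2}$), and your proposed ``transport along the chain $\alpha$ back to $V_{1}\cup V_{2}$'' is not an argument but a hope --- you yourself concede that verifying its availability is ``where the bulk of the work lies.'' As stated, this part of your plan would stall.

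The way the claim is actually used (in the lower bound for $\varepsilon(X,W)$ in Claim \ref{cl:5}) makes clear that the quantity being bounded is the number of \emph{edges} from $\{x_{1},x_{2}\}$ to $W$, i.e.\ $\left\vert N(x_{1})\cap W\right\vert +\left\vert N(x_{2})\cap W\right\vert$; common external neighbours are counted once per edge and are harmless. With that reading, Part 2 reduces to exactly the same kind of bookkeeping as your other cases: writing $R'=x_{1}yzx_{2}$, one excludes the chords $x_{1}x_{2}$ and (in the $\mathcal{R}_{4}^{1}$ case, where $W\cap V(R')=\{y\}$) the chord $x_{1}z$ by the same merge-through-$W$ device, obtaining $N(x_{1})\subset W$ and $N(x_{2})\setminus\{z\}\subset W$, hence $d(x_{1})+d(x_{2})-1$ edges; in the $\mathcal{R}_{4}^{2}$ case both internal vertices lie in $W$ and one gets $N(x_{1})\subset W$ and $N(x_{2})\subset W$, hence $d(x_{1})+d(x_{2})$. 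This is the paper's route, and it makes the entire final stage of your plan unnecessary. So: correct skeleton, but you misidentified the quantity to be counted in Part 2 and consequently built your hardest step around a statement that is neither needed nor (by these methods) provable.
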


\begin{proof}[Proof of Claim \ref{cl:4}]$ $\newline
%Proof of Claim $4$:\newline
1) If $x\in V_{1},$ then by Claim $1$, $N\left( x\right) \subset W.$ If $%
x\in X\cap V\left( \mathcal{R}_{3}\right) ,$ then let $R=xyx^{\prime }.$ By
the minimality of $\mu $, $x$ and $x^{\prime }$ are not adjacent. It follows
that $N\left( x\right) \subset $$W$.
2) Let $R^{\prime }=x_{1}yzx_{2}.$ By the minimality of $p_{2},$ 
there is no edge $x_{1}x_{2}.$\newline
If $W\cap  R' =\left\{ y\right\} $ then $%
x_{1}z\notin E\left( G\right) $ by the minimality of the path partition. It
follows that $N\left( x_{1}\right) \subset $$W$ and $(N\left( x_{2}\right)
-\left\{ z\right\} )\subset W$.\newline
If $W\cap V\left( R^{\prime }\right) =\left\{ y,z\right\} ,$ then $N\left(
x_{1}\right) \subset W$ and $N\left( x_{2}\right) \subset W.$\newline
3) Clearly if $x\in V_{2}$, then $x$ has $d\left( x\right) -1$ neighbors in $%
W.$\newline
Let $R^{\prime \prime }=x_{1}yztx_{2}.$ By Claim \ref{cl:2}, we have
$W\cap V\left( R^{\prime\prime }\right) =\left\{ z\right\} .$ It follows that
$N\left( x_{1}\right) -\left\{y\right\} \subset w$ and $N\left( x_{2}\right) -\left\{ t\right\} \subset W,$ completing the proof of Claim \ref{cl:4}. 
\end{proof}
Now we bound first $|\mathcal{P}^{\prime }|$ then $\mu (G).$\newline

B) \textbf{Calculations of the bounds}\newline
1) Bound of $p_{1}+2p_{2}$ \newline
Let $k=\frac{\Delta }{\delta }.$ We shall prove the following inequality.

\begin{claim}
\label{cl:5} $p_{1}+2p_{2}\leq (p_{3}+p_{4}+p_{5})(k-2)+\frac{2}{\delta }%
p_{2}$ where $p_{i}$ is the number of paths of order $i$ in $\mathcal{P}%
^{\prime }$.
\end{claim}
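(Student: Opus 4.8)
The plan is to bound the number of edges $\varepsilon(X,W)$ in the two opposite ways promised by the sketch and to compare them: reading the edges from the $X$-side with the degree bound $\delta$ gives a lower bound, while reading the same edges from the $W$-side, path by path, with the degree bound $\Delta$ gives an upper bound. Two structural facts make the bookkeeping possible. First, $X$ consists only of end vertices while, by Claim~\ref{cl:2}, $W\subseteq C_3\cup Int(\mathcal{R}_4)\cup C_5$ consists only of interior vertices, so $X\cap W=\emptyset$ and every $X$--$W$ edge is counted exactly once from either side; thus $\varepsilon(X,W)=\sum_{x\in X}|N(x)\cap W|=\sum_{w\in W}|N(w)\cap X|$. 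Second, a path of order at least $3$ lies in $\mathcal{P}'$ exactly when it has been activated, i.e. when it carries a vertex of $W$ in its interior; since no interior vertex of a path of order $\ge 6$ can belong to $C_3\cup Int(\mathcal{R}_4)\cup C_5$, only paths of order $1,2,3,4,5$ occur in $\mathcal{P}'$. Each path of $\mathcal{R}_3\cap\mathcal{P}'$ and of $\mathcal{R}_5\cap\mathcal{P}'$ carries exactly its centre in $W$, while a path of $\mathcal{R}_4\cap\mathcal{P}'$ carries one or two interior vertices, splitting these paths into $\mathcal{R}_4^1$ and $\mathcal{R}_4^2$ with $p_4=p_4^1+p_4^2$.

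For the lower bound I would sum $|N(x)\cap W|$ over $x\in X$, grouping by the path carrying $x$ and substituting the values of Claim~\ref{cl:4}: each of the $p_1$ isolated vertices gives at least $\delta$; the two ends of a $2$-path give at least $2\delta-2$; the two ends of a $3$-path give at least $2\delta$; the two ends of a $4$-path give at least $2\delta-1$ for a path in $\mathcal{R}_4^1$ and at least $2\delta$ for one in $\mathcal{R}_4^2$; and the two ends of a $5$-path give at least $2\delta-2$. Here I would be careful to read ``$\{x_1,x_2\}$ has $d(x_1)+d(x_2)-1$ neighbours in $W$'' in Claim~\ref{cl:4} as the edge count $|N(x_1)\cap W|+|N(x_2)\cap W|$, which is precisely what contributes to $\varepsilon(X,W)$. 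Summation gives
\[
\varepsilon(X,W)\ \ge\ \delta p_1+(2\delta-2)p_2+2\delta p_3+(2\delta-1)p_4^1+2\delta p_4^2+(2\delta-2)p_5 .
\]

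For the upper bound I would fix a path $R$ carrying vertices of $W$ and bound $\sum_{w\in W\cap V(R)}|N(w)\cap X|$ using $d(w)\le\Delta$, subtracting the path-neighbours of $w$ that cannot lie in $X$. The centre of a $3$-path has both path-neighbours in $X$, giving only $\le\Delta$; the $W$-vertex of an $\mathcal{R}_4^1$-path has one path-neighbour interior to the path, hence outside $X$, giving $\le\Delta-1$; the centre of a $5$-path has both path-neighbours interior, giving $\le\Delta-2$; and for an $\mathcal{R}_4^2$-path Claim~\ref{cl:3} yields the far stronger $\varepsilon(X,V(R))=4$, so its two $W$-vertices contribute at most $4$. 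This produces
\[
\varepsilon(X,W)\ \le\ \Delta p_3+(\Delta-1)p_4^1+4p_4^2+(\Delta-2)p_5 .
\]

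Chaining the two estimates and cancelling the common $p_3,p_4^1,p_5$ contributions would leave $\delta p_1+(2\delta-2)p_2\le(\Delta-2\delta)(p_3+p_4^1+p_5)+(4-2\delta)p_4^2$. The bulk of the labour is getting these per-path bounds exactly right, but the single point where the hypothesis is actually needed is the $\mathcal{R}_4^2$ term: it carries the coefficient $4-2\delta$ rather than the uniform $\Delta-2\delta$, and I expect this mismatch to be the main obstacle. It is resolved by $\Delta\ge 2\delta\ge 4$, which gives $4-2\delta\le\Delta-2\delta$ and hence collapses the right-hand side to $(\Delta-2\delta)(p_3+p_4+p_5)$. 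Dividing by $\delta$ and writing $k=\Delta/\delta$ turns $\delta p_1+(2\delta-2)p_2\le(\Delta-2\delta)(p_3+p_4+p_5)$ into $p_1+\bigl(2-\tfrac{2}{\delta}\bigr)p_2\le(k-2)(p_3+p_4+p_5)$, which is exactly Claim~\ref{cl:5} after transposing $\tfrac{2}{\delta}p_2$ to the other side.
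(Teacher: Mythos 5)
Your proposal is correct and follows essentially the same route as the paper: both bound $\varepsilon(X,W)$ from below via Claim~\ref{cl:4} with $d(x)\geq\delta$ and from above path-by-path on the $W$-side (with the special treatment of $\mathcal{R}_4^{2}$ via Claim~\ref{cl:3}), then use $\Delta\geq 2\delta\geq 4$ to absorb the $4p_4^{2}$ term into $\Delta p_4^{2}$ before dividing by $\delta$. Your explicit justification that $\mathcal{P}'$ contains no paths of order at least $6$, and your careful reading of Claim~\ref{cl:4} as an edge count, are points the paper leaves implicit, but the argument is the same.
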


\begin{proof}[Proof of Claim \ref{cl:5}]$ $\newline
Put $p_{_{4}}^{\prime }=\left\vert \mathcal{R}_{4}^{1}\right\vert $\ and $%
p_{_{4}}^{\prime \prime }=\left\vert \mathcal{R}_{4}^{2}\right\vert .$ Let $%
\varepsilon \left( X,W\right) $ be the number of edges between $X$ and $W$.
Let $w$ be any vertex of $W$.
Observe that for $w\in $$ Int\left( \mathcal{R}_{5}\right) $, $w$ has at
most\ $\Delta -2$\ neighbors in $X.$ For $w\in Int\left( 
\mathcal{R}_{4}^{1}\right) ,$\ then $w$ has at least one neighbor which
does not belong to $X$ and so, $w$ has at most $\Delta -1$ neighbors in $X.$
By Claim \ref{cl:3}, if $w\in Int\left( \mathcal{R}_{4}^{2}\right) ,$
then $w$ has exactly two neighbors in $X.$ If $w\in $$W\cap  Int{%
R}_{3}$ then $w$ has at most $\Delta $ neighbors in $X.$ It follows
that $\varepsilon \left( X,W\right) \leq p_{3}\Delta +p_{4}^{\prime }\left(
\Delta -1\right) +4p_{4}^{\prime \prime }+p_{5}\left( \Delta -2\right) .$

On the other hand, by Claim \ref{cl:4},\newline
$\varepsilon \left( X,W\right) \geq \sum\limits_{x\in End(\mathcal{R}%
_{1})}d(x)+\sum\limits_{x\in End(\mathcal{R}_{2})}(d(x)-1)+\sum\limits_{x\in
End(\mathcal{R}_{3})\cap X}d(x)$\newline
${+}$$\sum\limits_{x_{1},x_{2}\in End(\mathcal{R}_{4}^{1})\cap
X}(d(x_{1})+d(x_{2})-1)+\sum\limits_{x_{1},x_{2}\in End(\mathcal{R}%
_{4}^{2})\cap X}(d(x_{1})+d(x_{2}))$\newline
$+\sum\limits_{x\in End(\mathcal{R}_{5})\cap X}(d(x)-1).$\ $\bigskip $%
\newline
As for each $x\in X$ we have $\delta \leq d(x)$, it follows that\newline
$p_{_{1}}\delta +2p_{_{2}}(\delta -1)+2p_{_{3}}\delta +%
p_{_{4}}^{\prime }(2\delta -1)+p_{4}^{\prime \prime }2\delta %
+2p_{_{5}}(\delta -1)\leq p_{3}\Delta +p_{4}^{\prime }\left( \Delta
-1\right) +4p_{4}^{\prime \prime }+p_{5}\left( \Delta -2\right) $.\newline
As $k\geq 2$ and $\delta $ $\geq 2,$ then $\Delta \geq 4$ and we replace $%
4p_{4}^{\prime \prime }$ by $\Delta p_{4}^{\prime \prime }$ in the last
inequality. Then 
$p_{_{1}}\delta +2p_{_{2}}(\delta -1)+2p_{_{3}}\delta +p_{_{4}}^{\prime
}(2\delta -1)+p_{_{4}}^{\prime \prime }2\delta +2p_{_{5}}(\delta -1)\leq
p_{3}\Delta +p_{4}^{\prime }\left( \Delta -1\right) +p_{4}^{\prime \prime
}\Delta $$+p_{5}\left( \Delta -2\right) .$ \newline
As $p_{_{4}}=p_{_{4}}^{\prime }+p_{_{4}}^{\prime \prime }$, we get $p_{_{1}}\delta +2p_{_{2}}(\delta -1)\leq p_{3}\left( \Delta -2\delta
\right) +p_{4}\left( \Delta -2\delta \right) +p_{5}\left( \Delta -2\delta
\right) .$\newline
Since $\Delta =k\delta ,$ then 
\begin{equation*}
p_{1}+2p_{2}\leq (p_{3}+p_{4}+p_{5})(k-2)+\frac{2}{\delta }p_{2}.
\end{equation*}%
This completes the proof of Claim \ref{cl:5} 
\end{proof}%$\blacksquare $\newline

{2) Calculation of the bound of }$\left\vert \mathcal{P}^{\prime
}\right\vert =p$\newline
By Claim \ref{cl:5}, there exists $r\leq k-2,$ such that 
\begin{equation}
p_{_{1}}+2p_{_{2}}=r\left( p_{_{3}}+p_{_{4}}+p_{_{5}}\right) +\frac{2}{%
\delta }p_{_{2}}  \label{un}
\end{equation}%
Let us call $n_{1}$ the order of $V(\mathcal{P}^{\prime })$. Clearly 
\begin{equation*}
p=p_{_{1}}+p_{_{2}}+p_{_{3}}+p_{_{4}}+p_{_{5}}
\end{equation*}%
and 
\begin{equation*}
n_{1}=p_{_{1}}+2p_{_{2}}+3p_{_{3}}+4p_{_{4}}+5p_{_{5}}.
\end{equation*}%
Using equality (\ref{un}), we obtain 
\begin{equation*}
p=\left( r+1\right) \left( p_{_{3}}+p_{_{4}}+p_{_{5}}\right) +\left( \frac{2%
}{\delta }-1\right) p_{_{2}}.
\end{equation*}%
As $\delta \geq 2,$ we get $p\leq \left( r+1\right) \left(
p_{_{3}}+p_{_{4}}+p_{_{5}}\right) .$ Again by equality (\ref{un}), we have%
\newline
$n_{1}=\left( r+3\right) p_{_{3}}+\left( r+4\right) p_{_{4}}+\left(
r+5\right) p_{_{5}}+\frac{2}{\delta }p_{_{2}}.$ This yields 
\begin{equation*}
n_{1}\geq \left( r+3\right) (p_{_{3}}+p_{_{4}}+p_{_{5}}).
\end{equation*}%
Since $r\leq \left( k-2\right) ,$ we get the inequality $p\leq \frac{k-1}{k+1%
}n_{1}.$ \newline
3) Bound of $\mu (G)$\newline
Let $G_{2}=G-V(\mathcal{P}^{\prime }).$ Let $n_{2}=n-n_{1}.$ Clearly $\mu
(G)\leq p+\mu (G_{2}).$ We know that $p\leq \frac{k-1}{k+1}n_{1}.$ Recall
that each path of $\mathcal{P}_{0}$ contained in $G_{2}$ has order at least $%
3$. It follows that $\mu (G_{2})\leq \frac{n_{2}}{3}.$ Since $k\geq 2,$ we
have $\frac{1}{3}\leq \frac{k-1}{k+1}$ and so $\mu (G_{2})\leq \frac{k-1}{k+1%
}n_{2}.$ Thus $\mu (G)\leq \frac{k-1}{k+1}n.$ This finishes the proof of the
theorem.$\blacksquare $

%\bibliography{references}
%\bibliography{sampleBibFile}
%\bibliographystyle{elsarticle-harv}
%\bibliography{revuefin}

\section*{References}

\bf{Declaration}

\it
The authors declare no funds no grants were received during the preparation of the manuscript\\
The authors declare they have no interest to disclose.\\
All the authors contribuated to this work and they approve the final manuscript.

\end{document}